\documentclass[11pt]{amsproc}
\usepackage{mathrsfs, color}
\usepackage{stmaryrd}
\usepackage{cases}
\usepackage{amssymb}
\usepackage{amsmath}
\usepackage{amsfonts}
\usepackage{graphicx}
\usepackage{amsmath,amstext,amsbsy,amssymb, color}

\usepackage{latexsym}
\usepackage{amsmath}
\usepackage{amssymb}
\usepackage{bm}
\usepackage{graphicx}
\usepackage{wrapfig}
\usepackage{fancybox}

\newtheorem{theorem}{Theorem}[section]
\newtheorem{lemma}[theorem]{Lemma}

\newtheorem{corollary}[theorem]{Corollary}
\theoremstyle{definition}

\theoremstyle{remark}
\newtheorem{remark}[theorem]{Remark}

\numberwithin{equation}{section} \errorcontextlines=0

\newcommand{\diag}{\mbox{diag}}

\newcommand{\Pf}{\mathrm{Pf}}

\newcommand{\per}{\mathrm{per}}

\newcommand{\ot}{\otimes}

\newcommand{\si}{\sigma}

\newcommand{\Mat}{\mathrm{Mat}}

\begin{document}
	
	\title[General Capelli-type identities]
	{General Capelli-type identities}
	
	\author{Naihuan Jing}
	\address{Department of Mathematics, North Carolina State University, Raleigh, NC 27695, USA}
	\email{jing@ncsu.edu}
	
    \author{Yinlong Liu}
	\address{Department of Mathematics, Shanghai University, Shanghai 200444, China}
	\email{yinlongliu@shu.edu.cn}
	
	\author{Jian Zhang}
	\address{School of Mathematics and Statistics,
		Central China Normal University, Wuhan, Hubei 430079, China}
	\email{jzhang@ccnu.edu.cn}
	
	\thanks{{\scriptsize
			\hskip -0.6 true cm MSC (2020): Primary: 17B37 Secondary: 20G05, 17B35, 17B66, 05E10
			\newline Keywords: Capelli identities,  Turnbull identities, Howe-Umeda-Kostant-Sahi identities, immanants.
	}}
	

\begin{abstract} The classical Capelli identity is an important determinantal identity of a matrix with noncommutative entries that
determines the center of the enveloping algebra of the general linear Lie algebra, and was used by Weyl as a main tool to study irreducible representations in his famous book on
classical groups. 

In 1996 Okounkov found higher Capelli identities involving immanants of the generating matrix of $U(\mathfrak{gl}(n))$ which
correspond to arbitrary orthogonal idempotent of the symmetric group. It turns out that Williamson also discovered
a general Capelli identity of immanants for $U(\mathfrak{gl}(n))$ in 1981. In this paper, we use a new method to derive a family of even more general Capelli identities that
include the aforementioned Capelli identities as special cases as well as many other Capelli-type identities as corollaries. In particular, we obtain 
generalized Turnbull's identities for both symmetric and antisymmetric matrices, as well as the generalized Howe-Umeda-Kostant-Sahi identities for antisymmetric matrices which confirm the conjecture of Caracciolo, Sokal, and Sportiello \cite{CSS}.

\end{abstract}
	\maketitle

\section{Introduction}

    The classical Capelli identity \cite{C}
    is one of the most important results in the classical invariant theory that can provide a set of generators for the center of enveloping
    algebra $U(\mathfrak{gl}(n))$.
    Let $E_{ij}$, $1\leq i,j\leq n$, be the basis of $\mathfrak{gl}(n)$ and $E=(E_{ij})_{n\times n}$, then the column determinant
\begin{equation*}
\begin{aligned}
&\det(E+diag(n-1,\ldots,0))
\\
&=\sum_{\sigma\in S_n}sgn(\sigma)
(E_{\sigma(1)1}+(n-1)\delta_{\sigma(1)1})(E_{\sigma(2)2}+(n-2)\delta_{\sigma(2)2})\cdots E_{\sigma(n)n}
\end{aligned}
\end{equation*}
is a central element of $U(\mathfrak{gl}(n))$.
Let $\mathcal{PD}(\mathbb{C}^{n\times n})$ be the algebra generated by $x_{ij}$ and $\partial_{ij}=\partial/\partial{x_{ij}}$, $1\leq i,j\leq n$.
Denote $X=(x_{ij})$, $D=(\partial_{ij})$, then
\begin{equation*}
E \mapsto XD^t
\end{equation*}
defines an algebra homomorphism from $U(\mathfrak{gl}(n))$ to $\mathcal{PD}(\mathbb{C}^{n\times n})$.
The image of the central element was determined by Capelli \cite{C} as follows.
\begin{equation*}
\det(XD^t+diag(n-1,\ldots,0))=\det X\det D^t.
\end{equation*}

The Capelli identity has been extended in various directions.
   Turnbull \cite{Turnbull} proved a Capelli-type identity for determinants of symmetric matrices and permanents of antisymmetric matrices.  Howe and Umeda \cite{HU} and Kostant and Sahi \cite{KS} independently discovered 
 a Capelli-type identity for determinants of antisymmetric matrices. Foata and Zeilberger \cite{FZ} gave short combinatorial proofs of Capelli's and Turnbull's identities, however, their method did not work for the Howe-Umeda-Kostant-Sahi identity.
Molev and Nazarov \cite{Molv1,Molv2,Nazarov} found the Capelli identities for the classical Lie algebra $\mathfrak{so}(n)$ and $\mathfrak{sp}(n)$ as well as the super analogs. The identity has also been generalized for different reductive dual pairs \cite{It1,It2}. Furthermore, the Capelli identities were extended to Manin matrices in \cite{CSS,CFR} and quantum groups in \cite{JZ2, JZ3, NUW1, NUW2}.

  In 1996, Okounkov \cite{Ok, Oo} gave higher Capelli identities 
  for determinants and permanents.
  These generalizations were also proved with different methods by Nazarov \cite{N2} and Molev \cite{Molv3}.
   It turns out that Williamson \cite{Williamson} had also derived general Capelli identities for
   immanants in 1981
   and Williamson's results naturally contain many other Capelli identities as special cases.
   Besides these Capelli identities,
   there is also an open question on how to generalize the Howe-Umeda-Kostant-Sahi identity to give a Capelli-type identity for
   determinants of antisymmetric matrices.

In this paper, we derive generalized Capelli-type identities for immanants that generalize Williamson and Okounkov's Capelli identities, which
   in turn include many previously known identities as special cases, for instances \cite{ CSS,CFR,FZ,Ok, Oo, Turnbull,UT, Wall,Williamson} etc. 
   Our new identities also confirm the aforementioned conjecture of Caracciolo-Sokal-Sportiello
   on Capelli-type identities for determinants of antisymmetric matrices \cite{CSS}.

   The general Capelli identity can be viewed as some deformation of Williamson's identity via a parametric matrix $H$,
   in which Williamson's identity is the special
   case of $H=I$ the identity matrix. 

   Our method is a deformation of Molev's method \cite{Molv3} in addition to ideas from the Yangian algebra.
   The proof is based on the properties of Jucys-Murphy elements in group algebra of the symmetric group.  We  do not need to use
the Wick formula \cite{Ok, Oo}
or Laplace expansion for Schur matrix functions which was one of the techniques in Williamson's method \cite{Williamson}.
The general Capelli identity can be viewed as some deformation of Okounkov and Williamson's identities via a parametric matrix $H$,
   where Williamson's identity is the special
   case of $H=I$ the identity matrix.
In this way, we naturally obtain a family of general Capelli-type identities, which include both Williamson's Capelli identities
   and many other Capelli identities as special cases.

   The layout of the paper is as follows.
 In Section \ref{s:gen-capelli-identities} we study the Capelli identities for immanants that generalize Okounkov's and Williamson's results  \cite{Ok, Oo, Williamson}. In Section \ref{s:gen-Turnbull identities} we generalize the Turnbull identities for the determinant of symmetric matrices and permanent of antisymmetric matrices. In Section \ref{s:gen-HUKS-identity} we obtain generalizations of the Howe-Umeda-Kostant-Sahi  identity which were  conjectured
by Caracciolo, Sokal and Sportiello \cite{CSS}.

\section{General Capelli identity}\label{s:gen-capelli-identities}
The higher Capelli identity was obtained by Okounkov \cite{Ok}, then he gave generalization of higher Capelli identities in \cite{Oo}.
In 1981 Williamson \cite{Williamson} obtained a general Capelli-type identity for immanants.

In this section, we will derive general Capelli identities which generalize both Williamson's and Okounkov's Capelli identities.

Let $\mathfrak{S}_r$ be the symmetric group, and let
$z_{k}$ be the usual Jucys-Murphy elements
\begin{equation*}
z_1=0, \qquad	z_k=\sum_{i=1}^{k-1}(i,k),\ 2\leq k \leq r,
\end{equation*}
where $z_k$ commutes with the symmetric group $\mathbb C\mathfrak{S}_{k-1}$ on $\{1, \ldots, k-1\}$.
We also need the reverse Jucys-Murphy elements $  {z}_{k}'$ given by
\begin{equation}
	 {z}_{k}'=\sum_{i=k+1}^{r}(k,i),\ 1\leq k \leq r-1,\qquad   z_r'=0,
\end{equation}
where $ {z}_{k}'$ commutes with the symmetric group $\mathbb C\mathfrak{S}_{r-k}$ on $\{k+1, \ldots,r\}$.


Let $\sigma\mapsto P^{\sigma}$ be the natural representation of $\mathfrak{S}_r$ on the
space $(\mathbb C^m)^{\otimes r}$ by permuting
the factors.
Define the following element in the algebra $\mathbb{C}\mathfrak{S}_r \ot \text{End}((\mathbb{C}^{m})^{\ot r})$:
\begin{equation}\label{e:idem}
     \mathcal{E}^{(r)}=\sum_{\si \in \mathfrak{S}_r} \si \ot P^{\si}.  
\end{equation}
More generally, we also extend the meaning of $P^{\sigma}$ by considering the permutation operator $P_{s\times m}\in \mathrm{Hom}(\mathbb{C}^{s}\otimes \mathbb{C}^{m}, \mathbb{C}^{m}\otimes \mathbb{C}^{s})$
defined by $P_{s\times m}(u\otimes v)=v\otimes u$, $u\in\mathbb C^s, v\in\mathbb C^m$. Explicitly
\begin{equation}\label{e:P}
P_{s \times m}=\sum_{i=1}^m \sum_{j=1}^s e_{ij}\otimes e_{ji},
\end{equation}
where $e_{ij}$ (resp. $e_{ji}$) are the standard matrix units in $\mathrm{Hom}(\mathbb C^s, \mathbb C^m)$ (resp. $\mathrm{Hom}$ $(\mathbb C^m, \mathbb C^s))$.

For convenience, we will denote $P_{s \times m} $ by $P $ if there is no confusion in the context.

Let $\mathcal A$ be an  algebra, we denote by $\Mat_{n \times m}(\mathcal A)$ the set of $n\times m$ matrices whose entries are in $\mathcal A$.
For $n=m$, we simply write $\Mat_{n \times m}(\mathcal A)$ by $\Mat_{n}(\mathcal A)$.
Throughout of the paper, we consider mostly matrices with entries in $\mathcal A$.
For any matrix $X=(X_{ij})$ in $\Mat_{n \times m}(\mathcal A)$, one can regard $X$ as an element of
$\mathcal A\otimes \mathrm{Hom}(\mathbb C^m, \mathbb C^n)$. That is,
\begin{equation}
  X=\sum_{1\leq i\leq n,\atop 1\leq j\leq m}X_{ij}\otimes e_{ij},
\end{equation}
where $e_{ij}$ are the standard matrix units in $\mathrm{Hom}(\mathbb C^m, \mathbb C^n)$.
For any $1\leq r\leq s$, we define $X_r$ the element in
$\mathcal A\otimes \mathrm{Hom}\big((\mathbb C^m)^{\otimes s},(\mathbb C^m)^{\otimes r-1}\otimes  \mathbb C^n\otimes \mathbb (C^m)^{\otimes s-r})\big)$
as follows,
\begin{equation}
   X_r=\sum_{1\leq i\leq n,\atop 1\leq j\leq m}X_{ij}\otimes 1^{\otimes r-1}\otimes e_{ij}\otimes 1^{\otimes s-r},
\end{equation}
where $1$ is the identity matrix in $\mathrm{End}(\mathbb {C}^m)$.


The following theorem is a generalization of the Capelli identity for immanants, which contains both Williamson's generalized
Capelli identity \cite{Williamson} and Okounkov's higher Capelli identities \cite{Oo} as special cases.

\begin{theorem}\label{Capelli immanant}
	Let $\mathcal A$ be an algebra, and let $X=(X_{ij}) \in \Mat_{n\times m}(\mathcal A)$, $Y=(Y_{ij})\in \Mat_{m \times s}(\mathcal A)$, where the entries of $X$ are mutually commutative and  those of $Y$ are arbitrary.  Suppose in
	$\mathcal A\otimes  \mathrm{Hom}(\mathbb C^{s}\otimes \mathbb C^{m}, \mathbb C^{m}\otimes \mathbb C^{n})$
\begin{align}\label{cape-rela1}
		&Y_1X_2-X_2Y_1=H_2P,
\end{align}
where $H$ is an $n\times s$ matrix. 
In terms of entries the relation \eqref{cape-rela1} is 
	\begin{equation}
		[X_{ij},Y_{kl}]=-\delta_{jk}H_{il}, \quad \quad 1\leq i \leq n,1\leq j,k \leq m , 1\leq l \leq s.
	\end{equation}
For $r>m=1$, we assume that
\begin{align}\label{cape-rela11}
		&X_1H_2=H_2X_1.
\end{align}
	Then the following identity holds in $\mathbb C\mathfrak{S}_r\ot \mathcal A\otimes \mathrm{Hom}( (\mathbb C^s)^{\ot r} ,  (\mathbb C^n)^{\otimes r})$ for any $r, m, n$.
	\begin{equation}\label{cape}
\begin{split}
&(X_1Y_1)(X_2Y_2-z_2\ot H_2) \cdots (X_rY_r-z_r\ot H_r)\mathcal{E}^{(r)}\\
&=\mathcal{E}^{(r)}(X_1Y_1-{z}_1'\ot H_1)\cdots (X_{r-1}Y_{r-1}-{z}_{r-1}'\ot H_{r-1})(X_rY_r)
 \\
& = X_1\cdots X_r Y_1\cdots Y_r \mathcal{E}^{(r)}\\
& =\mathcal{E}^{(r)} X_1\cdots X_r Y_1\cdots Y_r.
\end{split}		
	\end{equation}

\end{theorem}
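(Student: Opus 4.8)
The plan is to prove the chain of four equalities in \eqref{cape} by induction on $r$, treating the two outer equalities (the "symmetrizer absorption" identities) as the heart of the matter and deducing the middle equality from them by a symmetry/transpose argument.

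First I would record the basic commutation consequences of \eqref{cape-rela1}. Writing $T_k := X_k Y_k$ acting on the $k$th tensor factor, I want to understand how $T_k$ interacts with the transposition operators $P^{(i,k)}$ for $i<k$, and hence with the Jucys--Murphy element $z_k = \sum_{i<k}(i,k)$ under the representation $\sigma\mapsto P^\sigma$. The key computation is that moving $T_k$ past $P^{(i,k)}$ produces a correction term governed by $H$: schematically, using $Y_1 X_2 - X_2 Y_1 = H_2 P^{(1,2)}$ and the fact that $X$ has commuting entries (so the $X$'s alone are symmetric under permutations), one gets a relation of the form $P^{(i,k)} T_k - T_i P^{(i,k)} = (\text{something involving } H_i \text{ or } H_k)$. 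Summing over $i<k$ converts this into the statement that $\sum_{i<k} T_i P^{(i,k)} + z_k \ot H_k$ commutes appropriately with $\mathcal{E}^{(r)}$, or more precisely that the operator $T_k - z_k \ot H_k$ is exactly the correction needed so that the product $T_1(T_2 - z_2\ot H_2)\cdots(T_r - z_r\ot H_r)$ can be pushed through $\mathcal{E}^{(r)}$. This is essentially Williamson's matrix argument adapted with the extra matrix $H$; the case $m=1$, $r>1$ is where the hypothesis \eqref{cape-rela11} $X_1 H_2 = H_2 X_1$ is needed to make the bookkeeping go through, and I would handle it as a separate (easy) base-type case.

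The main induction step I would carry out as follows. Assume the first equality holds for $r-1$, i.e.
\begin{equation*}
(X_1Y_1)(X_2Y_2-z_2\ot H_2)\cdots(X_{r-1}Y_{r-1}-z_{r-1}\ot H_{r-1})\mathcal{E}^{(r-1)} = X_1\cdots X_{r-1}Y_1\cdots Y_{r-1}\mathcal{E}^{(r-1)}.
\end{equation*}
Then multiply on the right by $(X_rY_r - z_r\ot H_r)$, and use the factorization $\mathcal{E}^{(r)} = \mathcal{E}^{(r-1)}\bigl(\sum_{k=1}^{r}(k,k+1)\cdots(r-1,r)\ot(\cdots)\bigr)$ — the standard recursion expressing the full symmetrizer-like element in terms of the one on $r-1$ letters together with the "coset" sum $1 + z_r$ (in the group algebra, $\sum_{\sigma\in\mathfrak S_r}\sigma = \bigl(\sum_{\tau\in\mathfrak S_{r-1}}\tau\bigr)(1 + (r-1,r) + (r-2,r) + \cdots + (1,r))$, i.e. the last factor is $1 + z_r$). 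Pushing $X_rY_r$ past $\mathcal{E}^{(r-1)}$ is harmless since factor $r$ is untouched by $\mathfrak S_{r-1}$; the term $z_r \ot H_r$ is precisely what is needed to absorb the difference between $(1+z_r)$ and the naive $1$, by the commutation relation established in the first paragraph. This is the step I expect to be the main obstacle: getting the indices and the left/right placement of $H$ exactly right so that the $z_r\ot H_r$ correction cancels against the $z_r$ appearing in the coset decomposition of $\mathcal{E}^{(r)}$, uniformly in $n,m,s$.

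Finally, for the second equality (the one with the reverse Jucys--Murphy elements $z_k'$ and $\mathcal{E}^{(r)}$ on the left), I would observe that it is the "mirror image" of the first under the anti-automorphism of $\mathbb C\mathfrak S_r$ sending $(i,k)\mapsto(r+1-i,\,r+1-k)$ together with reversing the order of the tensor factors; under this symmetry $z_k \leftrightarrow z_{r+1-k}'$ and $\mathcal{E}^{(r)}\mapsto\mathcal E^{(r)}$, so the second equality follows formally from the first (applied to the reversed data) without a new induction. The fourth equality, $X_1\cdots X_r Y_1\cdots Y_r\,\mathcal E^{(r)} = \mathcal E^{(r)}\,X_1\cdots X_r Y_1\cdots Y_r$, follows because $X_1\cdots X_r = X^{\ot r}$-type operators and $Y_1\cdots Y_r$ are symmetric in the tensor factors up to the commutation relations, and in any case both sides equal the common value of the first and second expressions; I would phrase it as: the first expression manifestly has $\mathcal E^{(r)}$ on the right, the second manifestly has it on the left, and they are equal, giving the commutation. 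I would close by noting the specializations ($H=I$ recovers Williamson; suitable choices of $X,Y$ recover Okounkov's higher Capelli identities) as remarks rather than as part of the proof.
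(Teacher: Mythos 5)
Your overall strategy is the paper's own proof run in mirror image: the paper establishes the second equality (reverse Jucys--Murphy elements, $\mathcal{E}^{(r)}$ on the left) by induction on $r$, stripping off the \emph{first} factor via $(r-1)!\,\mathcal{E}^{(r)}=\mathcal{E}^{(r)}\mathcal{E}^{(r-1)}$ with $\mathcal{E}^{(r-1)}$ built on $\{2,\dots,r\}$, and then notes the first equality is proved similarly; you propose to do the first equality by appending the $r$-th factor through the coset decomposition and to get the second by an order-reversing symmetry. That reorganization is fine, but the step you yourself flag as ``the main obstacle'' is precisely the nontrivial content of the proof, and you have not supplied the identity that closes it. After converting $z_1'\ot H_1$ into $\sum_{k}1\ot P^{(1,k)}H_1$ via $\mathcal{E}^{(r)}\bigl((i,j)\ot 1\bigr)=\mathcal{E}^{(r)}\bigl(1\ot P^{(i,j)}\bigr)$, and after pushing $Y_1$ through $X_2\cdots X_r$ with \eqref{cape-rela1} to produce the corrections $X_1\cdots X_{i-1}H_iP^{(1,i)}X_{i+1}\cdots X_r$, the two families of terms do not cancel termwise as written: one has $H$ in slot $i$ sandwiched between the $X$'s, the other has $H$ in slot $1$ out front. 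The cancellation rests on
\begin{equation*}
X_1X_2\cdots X_{i-1}H_i P^{(1,i)} \;=\; P^{(1,i)}H_1X_2\cdots X_{i-1}X_i ,
\end{equation*}
proved by conjugating with the cycle $P^{(1,2)}\cdots P^{(i-1,i)}$ and using the braid-type relation $P^{(1,2)}P^{(2,3)}\cdots P^{(i-1,i)}P^{(1,i)}=P^{(2,3)}\cdots P^{(i-1,i)}$ together with the commutativity of the entries of $X$ \emph{and} the relation $X_1H_2=H_2X_1$. This also corrects your reading of \eqref{cape-rela11}: it is not a separate easy case for $m=1$ but a hypothesis used uniformly in every induction step; it just happens to follow from \eqref{cape-rela1} by the Jacobi identity when $m\ge 2$ and must be postulated when $m=1$.

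The second gap is your treatment of the last equality $X_1\cdots X_rY_1\cdots Y_r\,\mathcal{E}^{(r)}=\mathcal{E}^{(r)}\,X_1\cdots X_rY_1\cdots Y_r$. Your argument --- ``both sides equal the common value of the first and second expressions'' --- is circular: proving the first equality gives that expression one equals expression three, and the mirror argument gives that expression two equals expression four, but nothing you have established identifies expressions one and two with each other; that identification is exactly what the third equality in the chain supplies. You need an independent argument for this commutation (the paper asserts one directly) rather than deriving it from the as-yet-unproved equality of the first two expressions.
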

\begin{proof}
If $m\geq 2$, the relation \eqref{cape-rela1} implies \eqref{cape-rela11}.
 Indeed, by the Jacobi identity for any indices $i,k\in [n]$, $j\neq l\in [m]$ and $t\in [s]$,
	\begin{equation}
		\begin{aligned}
			&[X_{ij},H_{kt}]=-[X_{ij},[X_{kl},Y_{lt}]]\\
			&=-[[X_{ij},X_{kl}],Y_{lt}]-[X_{kl},[X_{ij},Y_{lt}]]\\
			&=0.
		\end{aligned}
	\end{equation}

We will prove the following equation by induction on $r$.
\begin{equation}
\begin{split}
&\mathcal{E}^{(r)}(X_1Y_1-{z}_1'\ot H_1)\cdots (X_{r-1}Y_{r-1}-{z}_{r-1}'\ot H_{r-1})(X_rY_r)
 \\
& =\mathcal{E}^{(r)} X_1\cdots X_r Y_1\cdots Y_r.\\
\end{split}		
	\end{equation}

It is obvious for $r=1$. If $r >1$, we note that
$(r-1)! \mathcal{E}^{(r)}=\mathcal{E}^{(r)}\mathcal{E}^{(r-1)}$,
where \begin{equation}
     \mathcal{E}^{(r-1)}=\sum_{\si \in \mathfrak{S}_{r-1}} \si \ot P^{\si},
\end{equation}
and  $\mathfrak{S}_{r-1}$ is the symmetric group  on $\{2,3,\ldots,r\}$.
Then $\mathcal{E}^{(r-1)}$ commutes with the Jucy-Murphy element $z_1'\otimes 1$.
It follows from the induction hypothesis that
	\begin{equation}
\begin{split}
&\mathcal{E}^{(r)}(X_1Y_1-{z}_1'\ot H_1)\cdots (X_{r-1}Y_{r-1}-{z}_{r-1}'\ot H_{r-1})(X_rY_r)
 \\
&= \frac{1}{(r-1)!}\mathcal{E}^{(r)}(X_1Y_1-{z}_1'\ot H_1)\mathcal{E}^{(r-1)}(X_2Y_2-{z}_2'\ot H_2)\cdots (X_rY_r)
\\
&= \frac{1}{(r-1)!}\mathcal{E}^{(r)}(X_1Y_1-{z}_1'\ot H_1)\mathcal{E}^{(r-1)}X_2\cdots X_r Y_2\cdots Y_r
\\
&= \mathcal{E}^{(r)}(X_1Y_1-{z}_1'\ot H_1) X_2\cdots X_r Y_2\cdots Y_r.
\end{split}		
	\end{equation}

The following identity can be verified by direct computation.
 \begin{equation}
\mathcal{E}^{(r)}\left((i,j)\otimes 1\right)=\mathcal{E}^{(r)}\left(1\otimes P^{(i,j)}\right).
 \end{equation}

Therefore,
	\begin{equation}\label{cape2}
		\begin{aligned}
&  \mathcal{E}^{(r)}(X_1Y_1-{z}_1'\ot H_1)\cdots (X_{r-1}Y_{r-1}-{z}_{r-1}'\ot H_{r-1})(X_rY_r)   \\
&=\mathcal{E}^{(r)}(X_1Y_1-\sum_{k=2}^{r}P^{(1,k)} H_1)X_2\cdots X_r Y_2\cdots Y_r\\
			&=\mathcal{E}^{(r)}X_1\cdots X_r Y_1\cdots Y_r\\
			&\quad +\sum_{i=2}^{r}\mathcal{E}^{(r)} X_1X_2\cdots X_{i-1}H_i P^{(1,i)} X_{i+1} \cdots X_r Y_2\cdots Y_r\\
			&\quad - \sum_{k=2}^{r}\mathcal{E}^{(r)} P^{(1,k)} H_1X_2\cdots X_rY_2\cdots Y_r.
		\end{aligned}
	\end{equation}
Using the relation in $\mathrm{End}( (\mathbb C^s),  (\mathbb C^m)^{\otimes i})$
\begin{equation}
	\begin{aligned}
	   P^{(1,2)}P^{(2,3)}\cdots P^{(i-1,i)} P^{(1,i)}= P^{(2,3)} P^{(3,4)}\cdots P^{(i-1,i)},
	\end{aligned}
\end{equation}
we have that
	\begin{equation}
		\begin{aligned}
			&X_1X_2\cdots X_{i-1}H_iP^{(1,i)}\\
			&= P^{(i-1,i)}\cdots P^{(1,2)}H_1X_2\cdots X_{i-1}X_i P^{(1,2)}\cdots P^{(i-1,i)}P^{(1,i)}\\
&=  P^{(i-1,i)}\cdots P^{(1,2)} H_1 X_2 \cdots X_{i-1} X_i P^{(2,3)}P^{(3,4)}\cdots P^{(i-1,i)}\\
&= P^{(1,i)}H_1X_2\cdots X_{i-1}X_i.
		\end{aligned}
	\end{equation}
for any $2 \leq  i\leq r$.

The equation
\begin{equation}\label{okou-cape}
\begin{split}
&(X_1Y_1)(X_2Y_2-z_2\ot H_2) \cdots (X_rY_r-z_r\ot H_r)\mathcal{E}^{(r)}\\
& = X_1\cdots X_r Y_1\cdots Y_r \mathcal{E}^{(r)}\\
\end{split}		
	\end{equation}
can be proved similarly.
And it is easy to see that
\begin{align*}
&X_1\cdots X_r Y_1\cdots Y_r \mathcal{E}^{(r)} =\mathcal{E}^{(r)} X_1\cdots X_r Y_1\cdots Y_r.
\end{align*}
Thus we obtain the theorem.
\end{proof}

\begin{remark}
Let  $\varphi: \mathfrak{S}_r\rightarrow \mathrm{GL}(V)$ be a representation of the symmetric group $\mathfrak{S}_r$.
We denote
	\[
	\mathcal{E}_{\varphi}^{(r)}=\sum_{\si \in \mathfrak{S}_r} \varphi(\si) \ot P^{\si}.
	\]
Then for multi-index $I=(i_1, \ldots, i_r)$ on $[m]=\{1, \ldots, m\}$, the coefficient of $e_{i_1}\otimes\cdots \otimes e_{i_r}$ in
	\begin{equation}
		\mathcal{E}_{\varphi}^{(r)} X_1\cdots X_r e_{k_1}\otimes\cdots \otimes e_{k_r}
	\end{equation}
	is the Schur matrix function:
	\[
	d^{\varphi}(X_{IK})= \sum_{\si \in \mathfrak{S}_r} \varphi(\si) \prod_{t=1}^{r} X_{i_{\si(t)},k_t},
	\]
whose trace is the immanant.
	
	Now define the map $\alpha_{J}: [r]\rightarrow [m]$ by $ \alpha_{J}(k)=j_k$ for $1 \leq k \leq r$. Let $v(\alpha_{J})=|\alpha^{-1}(1) |! |\alpha^{-1}(2)| ! \cdots | \alpha^{-1}(m) | ! $
for any $r, m, n$.

Apply $ \varphi$ to both sides of  the equation
	\begin{equation}\label{cape}
\begin{split}
&\mathcal{E}^{(r)}(X_1Y_1-{z}_1'\ot H_1)\cdots (X_{r-1}Y_{r-1}-{z}_{r-1}'\ot H_{r-1})(X_rY_r)
\\
& =\mathcal{E}^{(r)} X_1\cdots X_r Y_1\cdots Y_r.
\end{split}		
	\end{equation}
and then act
on $e_{k_1}\otimes \cdots \otimes e_{k_r}$, by comparing the coefficients of
$e_{i_1}\otimes \cdots \otimes e_{i_r}$ we
 obtain the following equation
	\begin{equation}\label{capelli williamson}
		\begin{aligned}
			&d^{\varphi}((XY)_{IK}-diag(\varphi(z_1'),\ldots,\varphi(z_r'))H_{IK})
			\\
			&=\sum\limits_{J} \dfrac{1}{v(\alpha_{J})} d^{\varphi}({X_{IJ}})d^{\varphi}({Y_{JK}}),
		\end{aligned}
	\end{equation}
	where $I=(i_1,\ldots ,i_r)$ and $K=(k_1,\ldots,k_r)$ are two multi-indices and the sum is over all indices of non-decreasing integers $J=(j_1,\ldots,j_r)$.
	In particular, if $s=n$ and taking $H=I$ in relation \eqref{capelli williamson}, we obtain Williamson's general Capelli identity in \cite{Williamson}.
\end{remark}

\begin{remark}

Let $T$ and $T'$ be two Young tableaux of shape $\mu$ and let $v_{T}$ and $v_{T'}$ be the corresponding vector in the Young basis for the irreducible $\mathbb C\mathfrak{S}_r$-module $V^{\mu}$. The action of the Jucys-Murphy elements $ {z}_{k}$ on the vector is described by the content $c_{T}$ of  the Young tableau \cite{OV}:
\begin{equation*}
    {z}_k\cdot v_T=c_{T}(k)v_T.
\end{equation*}

Moreover,
\begin{equation}
	\begin{aligned}
		\langle \mathcal{E}^{(r)}v_{T'},v_{T} \rangle&= \sum_{\si \in \mathfrak{S}_r} \langle\si v_{T'},v_T \rangle \ot P^{\si}\\
     &=\sum_{\si \in \mathfrak{S}_r} \langle v_{T'},\si^{-1}v_{T} \rangle \ot P^{\si}\\
     &= \Psi_{TT'}.
	\end{aligned}
\end{equation}

Applying  both sides of \eqref{okou-cape} to $v_{T'}$  and taking inner product with $v_T $, we obtain that

\begin{equation}\label{okou-cape 1}
	\begin{split}
		&(X_1Y_1)(X_2Y_2-c_{T}(2)\ot H_2)\cdots(X_rY_r-c_{T}(r)\ot H_r)\Psi_{TT'}\\
		& = X_1\cdots X_r Y_1\cdots Y_r\Psi_{TT'}.
	\end{split}		
\end{equation}
If $H=I$, equation \eqref{okou-cape 1} specialize to  the generalized higher Capelli  identities given by Okounkov \cite{Oo}
\begin{equation}\label{okou-cape 2}
	\begin{split}
		&(X_1Y_1)(X_2Y_2-c_{T}(2) )\cdots(X_rY_r-c_{T}(r) )\Psi_{TT'}\\
		& = X_1\cdots X_r Y_1\cdots Y_r\Psi_{TT'}.
	\end{split}		
\end{equation}

\end{remark}

\section{Generalized Turnbull's identities}\label{s:gen-Turnbull identities}
In this section we study the Turnbull-type identities for determinants of symmetric matrices and permanents of antisymmetric matrices.

Let $A_r$ and $S_r$ be the  normalized antisymmetrizer and symmetrizer  on $\text{End}((\mathbb{C}^{m})^{\ot r})$ respectively:
\begin{equation}
	A_r=\frac{1}{r!}\sum_{\sigma\in \mathfrak{S}_r}\text{sgn}\ \sigma P^{\sigma}, \qquad
	S_r=\frac{1}{r!}\sum_{\sigma\in \mathfrak{S}_r} P^{\sigma}.
\end{equation}

They are two special idempotents, 
respectively corresponding to the irreducible  representations of $\mathfrak{S}_r$ associated with
the Young tableaux of shapes $(1^r)$ and $(r)$. Let $X$ be any matrix in $\Mat_{n\times m}(\mathcal A)$. In the former case,
the coefficient of $e_{i_1}\otimes\cdots \otimes e_{i_r}$ in
\begin{equation}
	A_rX_1\cdots X_r e_{k_1}\otimes\cdots \otimes e_{k_r}
\end{equation}
is $\dfrac{1}{r!}\det(X_{IK})$. In the latter case, 
the coefficient of  $e_{i_1}\otimes\cdots \otimes e_{i_r}$  in
\begin{equation}
	S_rX_1\cdots X_r e_{k_1}\otimes\cdots \otimes e_{k_r}
\end{equation}
is
\[
\dfrac{1}{r!}\per(X_{IK})=\dfrac{1}{r!}\sum_{\sigma\in \mathfrak{S}_r}X_{i_{\sigma(1)},k_1}\cdots X_{i_{\sigma(r)},k_r}.
\]

Let $Q=P^{t_1}$ be the partial transpose of the transposition $P$, i.e.
\begin{equation}\label{e:Q}
 Q=\sum_{i=1}^n \sum_{j=1}^n E_{ij}\otimes E_{ij} \in \mathrm{End}(\mathbb{C}^{n}\otimes \mathbb{C}^{n}).
\end{equation}

\begin{lemma}\label{AXQ-SXQ}
Let $X$ be a matrix $\Mat_{n}(\mathcal A)$.
   For $2\leq i \leq r$, we have that in $  \mathcal A \ot \mathrm{End}( (\mathbb C^n)^{\otimes r})$
   \begin{equation}
   	\begin{aligned}
   		&A_rX_1Q^{(1,i)}=0,\ if \mbox{$X_{n\times n}$ is symmetric},\\
   		&S_rX_1Q^{(1,i)}=0,\ if \mbox{$X_{n\times n}$ is anti-symmetric}.
   	\end{aligned}
   \end{equation}
\end{lemma}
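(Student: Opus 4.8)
The plan is to prove the two statements by a symmetry argument: multiplying on the right by $Q^{(1,i)}$ is essentially the operation of transposing the first and $i$th tensor factors of the incoming vector, so when combined with the (anti)symmetrizer it forces a contradiction with the (anti)symmetry of $X$. First I would record the key relation between $Q^{(1,i)}$ and $P^{(1,i)}$: since $Q=P^{t_1}$ is the partial transpose, we have for a matrix $X$ acting in the first slot that $X_1 Q^{(1,i)} = X_i^t Q^{(1,i)}$, or more usefully $Q^{(1,i)} X_1 Q^{(1,i)} = X_i^t$ on the relevant tensor factors; equivalently $X_1 Q^{(1,i)} = Q^{(1,i)} X_i^t$. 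This is just the statement that conjugating the matrix unit $E_{ab}$ in slot $1$ by $Q^{(1,i)}$ swaps it with $E_{ba}$ in slot $i$, which is immediate from \eqref{e:Q}.

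Next I would use the standard fact that $A_r$ (resp. $S_r$) is a central idempotent in the image of $\mathbb{C}\mathfrak{S}_r$ acting by the $P^{\sigma}$, so in particular $A_r P^{(1,i)} = -A_r = P^{(1,i)} A_r$ and $S_r P^{(1,i)} = S_r = P^{(1,i)} S_r$. Then I would compute $A_r X_1 Q^{(1,i)}$ as follows. Write $Q^{(1,i)} = P^{(1,i)} \cdot (Q^{(1,i)} P^{(1,i)})$; but a cleaner route is: since $X$ is symmetric, $X_1 = X_1^t$ entrywise in slot $1$, and one checks directly from \eqref{e:Q} and \eqref{e:P} that $X_1 Q^{(1,i)} = P^{(1,i)} X_i Q^{(1,i)}$ when $X$ is symmetric (transposing slot $1$ through $Q$ and using $X=X^t$ to restore it, at the cost of a factor $P^{(1,i)}$). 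Therefore $A_r X_1 Q^{(1,i)} = A_r P^{(1,i)} X_i Q^{(1,i)} = -A_r X_i Q^{(1,i)}$. On the other hand, since $Q^{(1,i)}$ only involves slots $1$ and $i$ and $A_r$ is symmetric under the transposition $(1,i)$ in the obvious sense, one also has $A_r X_i Q^{(1,i)} = A_r X_1 Q^{(1,i)}$ — because $A_r X_i = A_r P^{(1,i)} X_1 P^{(1,i)} = A_r X_1 P^{(1,i)}$ and $P^{(1,i)} Q^{(1,i)} = Q^{(1,i)}$ (this last identity is a direct check: $P^{(1,i)}$ acts as a permutation of basis vectors and $Q^{(1,i)}$ as the "Bell-state" contraction, and composing them leaves $Q^{(1,i)}$ unchanged since $P$ and $Q$ in the same two slots satisfy $PQ = Q$). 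Combining, $A_r X_1 Q^{(1,i)} = -A_r X_1 Q^{(1,i)}$, hence it is zero. The anti-symmetric case is identical with signs reversed: $X=-X^t$ produces $S_r X_1 Q^{(1,i)} = S_r P^{(1,i)} X_i Q^{(1,i)} \cdot(-1) = -S_r X_i Q^{(1,i)} = -S_r X_1 Q^{(1,i)}$, so it vanishes.

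The main obstacle is getting the bookkeeping of the three elementary identities exactly right, namely (a) $X_1 Q^{(1,i)} = \pm P^{(1,i)} X_i Q^{(1,i)}$ for $X$ symmetric/antisymmetric, (b) $P^{(1,i)} Q^{(1,i)} = Q^{(1,i)}$, and (c) $X_i P^{(1,i)} = P^{(1,i)} X_1$; each follows by a one-line computation with matrix units from \eqref{e:P} and \eqref{e:Q}, but one must be careful about which slot the transpose lands in and about left-versus-right multiplication. Once these are in hand, the conclusion is the short sign-contradiction above. I would present (a)--(c) as displayed one-line computations and then assemble them, noting that the argument is uniform in $i$ for $2 \le i \le r$ since $Q^{(1,i)}$ and $P^{(1,i)}$ only ever involve the two slots $1$ and $i$ while $A_r$, $S_r$ are invariant under all of $\mathfrak{S}_r$.
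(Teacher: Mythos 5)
Your overall strategy (manipulating $P^{(1,i)}$, $Q^{(1,i)}$ and the partial transpose at the operator level, then extracting a sign contradiction from $A_rP^{(1,i)}=-A_r$) is viable and close to the paper's proof, which does the same computation after applying everything to a basis vector $e_{j_1}\otimes\cdots\otimes e_{j_r}$ and inserting $A_r=A_r\tfrac{1-P^{(1,i)}}{2}$. But as assembled your argument has a genuine gap. The step you label (a), namely $X_1Q^{(1,i)}=P^{(1,i)}X_iQ^{(1,i)}$, is in fact an identity valid for \emph{every} matrix $X$: by your own (b) and (c), $P^{(1,i)}X_iQ^{(1,i)}=X_1P^{(1,i)}Q^{(1,i)}=X_1Q^{(1,i)}$. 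So the symmetry of $X$ is never actually used anywhere in your chain. The "contradiction" then comes from a sign error in the second half: from $P^{(1,i)}X_1P^{(1,i)}=X_i$ and $A_rP^{(1,i)}=-A_r$ one gets
\begin{equation*}
A_rX_i=A_rP^{(1,i)}X_1P^{(1,i)}=-A_rX_1P^{(1,i)},
\end{equation*}
not $+A_rX_1P^{(1,i)}$ as you wrote; hence $A_rX_iQ^{(1,i)}=-A_rX_1Q^{(1,i)}$, and combining with your first relation $A_rX_1Q^{(1,i)}=-A_rX_iQ^{(1,i)}$ yields only the tautology $A_rX_1Q^{(1,i)}=A_rX_1Q^{(1,i)}$. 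That the argument must be broken is clear from the fact that it nowhere invokes $X=X^t$: the conclusion is false for general $X$ (e.g.\ $r=n=2$, $X=E_{12}$ gives $A_2X_1Q\,(e_1\otimes e_1)=\tfrac12(e_1\otimes e_2-e_2\otimes e_1)\neq 0$).

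The fix is short and uses the correct identity you already wrote down in your first paragraph: the partial-transpose relation $X_1Q^{(1,i)}=(X^t)_iQ^{(1,i)}$ (with \emph{no} $P^{(1,i)}$ inserted). For symmetric $X$ this gives $X_1Q^{(1,i)}=X_iQ^{(1,i)}$, and together with the (sign-corrected) relation $A_rX_iQ^{(1,i)}=-A_rX_1Q^{(1,i)}$ you get $A_rX_1Q^{(1,i)}=-A_rX_1Q^{(1,i)}=0$; for antisymmetric $X$ the two signs flip simultaneously and the same cancellation gives $S_rX_1Q^{(1,i)}=0$. With that repair your proof is correct and is essentially an operator-level rephrasing of the paper's argument, which verifies the same cancellation coefficientwise as $\tfrac{X_{kl}-X_{lk}}{2}$ (resp.\ $\tfrac{X_{kl}+X_{lk}}{2}$) on basis vectors.
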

\begin{proof} We prove the first identity by applying
$A_rX_1Q^{(1,i)}$ to the vector $e_{j_1,\dots,j_r}=e_{j_1}\ot \cdots \ot e_{j_r}$. 
If $j_1\neq j_i$, $Q^{(1,i)}e_{j_1,\dots,j_r}=0$.
For $j_1= j_i$, we have that
	\begin{equation}
		\begin{aligned}
			&A_rX_1Q^{(1,i)}e_{j_1,\dots,j_r}\\
			&=A_rX_1\sum_{l=1}^{n}e_{l,j_2,\dots,j_{i-1},l,j_{i+1},\dots, j_r}\\
			&=A_r\sum_{k,l=1}^{n} X_{kl}e_{k,j_2,\dots,j_{i-1},l,j_{i+1},\dots, j_r}\\
			&=A_r\dfrac{1-P^{(1,i)}}{2}\sum_{k,l=1}^{n}X_{kl}e_{k,j_2,\dots,j_{i-1},l,j_{i+1},\dots, j_r}\\
			&=A_r\sum_{k,l=1}^{n}\dfrac{X_{kl}-X_{lk}}{2}e_{k,j_2,\dots,j_{i-1},l,j_{i+1},\dots, j_r},
		\end{aligned}
	\end{equation}
 which vanishes for the symmetric matrix $X$. 
 The second identity can be  proved by the same argument.
\end{proof}

The following generalizes  Turnbull's formula \cite{Turnbull, FZ} for symmetric matrices.
\begin{theorem}\label{sym-turnbull}
Let $X=(X_{ij}) \in \Mat_{n}(\mathcal A)$ and $Y=(Y_{ij})\in \Mat_{n \times m}(\mathcal A)$. Suppose $X$ is symmetric with commutative entries and $Y$ is arbitrary.
 Suppose in $\mathcal A\otimes  \mathrm{Hom}(\mathbb C^{m}\otimes \mathbb C^{n}, \mathbb C^{n}\otimes \mathbb C^{n})$
	\begin{align}\label{turnbull-rela1}
	& Y_1X_2-X_2Y_1=(Q+P)H_1,
	\end{align}
where $H$ is an $n\times m$ matrix.
This relation can be written entry-wise as
	\begin{equation}
		[X_{ij},Y_{kl}]=-(\delta_{ik}H_{jl}+\delta_{jk}H_{il}), \quad \quad 1\leq i,j,k \leq n, 1\leq l \leq m.
	\end{equation}
Moreover, for $n=2,3$, we assume that
	\begin{align}\label{turnbull-rela11}
&  X_1H_2=H_2X_1.
	\end{align}
	Then
in $ \mathcal A \ot \mathrm{Hom}( (\mathbb C^m)^{\otimes r},  (\mathbb C^n)^{\otimes r})$ one has
that
	\begin{equation}\label{Turnbull}
		A_r(X_1Y_1+ (r-1)H_1)\cdots (X_rY_r) =A_r X_1\cdots X_r Y_1\cdots Y_r,
	\end{equation}
for any $r, n, m$.
		In particular,
	\begin{equation}\label{capelli symmetric}
		{\det}\big((XY)_{IL}+H_{IL} \diag(r-1,r-2,\ldots,1,0) \big) \\
		=\sum\limits_{J} {\det}(X_{IJ}){\det}(Y_{JL}),
	\end{equation}
	where $I=\{i_1,\ldots ,i_r\} \subset[n]$ and $L=\{l_1,\ldots,l_r\}  \subset[m]$ are two sets of cardinality $r$, and the sum is taken over all subset $J=\{j_1,\ldots,j_r\}$ of $ [n]$.
\end{theorem}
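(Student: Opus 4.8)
The plan is to mimic the inductive structure of the proof of Theorem \ref{Capelli immanant}, but working with the antisymmetrizer $A_r$ in place of $\mathcal E^{(r)}$ and exploiting Lemma \ref{AXQ-SXQ} to kill the extra terms created by the $(Q+P)$ coupling. First I would record, exactly as in the $m\ge 2$ case of Theorem \ref{Capelli immanant}, that relation \eqref{turnbull-rela1} already forces $X_1H_2=H_2X_1$ whenever $n\ge 4$; for $n=2,3$ this is the hypothesis \eqref{turnbull-rela11}. Then I would prove \eqref{Turnbull} by induction on $r$, the case $r=1$ being trivial. For the inductive step I would use $(r-1)!\,A_r=A_r A_{r-1}$, where $A_{r-1}$ is the normalized antisymmetrizer on the factors $\{2,\dots,r\}$, and note that $A_{r-1}$ commutes with the scalar $(r-1)$ on the first factor. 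So
\begin{equation*}
A_r(X_1Y_1+(r-1)H_1)(X_2Y_2+\cdots)(X_3Y_3+\cdots)\cdots
= A_r(X_1Y_1+(r-1)H_1)A_{r-1}(X_2Y_2+\cdots)\cdots,
\end{equation*}
and the induction hypothesis applied inside $A_{r-1}$ collapses the tail $(X_2Y_2+(r-2)H_2)\cdots(X_rY_r)$ to $X_2\cdots X_rY_2\cdots Y_r$, leaving
\begin{equation*}
A_r\bigl(X_1Y_1+(r-1)H_1\bigr)X_2\cdots X_r Y_2\cdots Y_r.
\end{equation*}

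Next I would commute $Y_1$ to the right past $X_2,\dots,X_r$ using \eqref{turnbull-rela1}, which produces $X_1\cdots X_rY_1\cdots Y_r$ plus a sum of $r-1$ correction terms of the form
\begin{equation*}
A_r\,X_1X_2\cdots X_{i-1}\,(Q^{(1,i)}+P^{(1,i)})H_1\,X_{i+1}\cdots X_r\,Y_2\cdots Y_r,
\end{equation*}
plus the $-(r-1)H_1$ term. The $-(r-1)H_1X_2\cdots X_rY_2\cdots Y_r$ term must cancel the total contribution of the $P^{(1,i)}$ pieces: here I would use, just as in the proof of Theorem \ref{Capelli immanant}, the braid relation $P^{(1,2)}\cdots P^{(i-1,i)}P^{(1,i)}=P^{(2,3)}\cdots P^{(i-1,i)}$ together with $X_1H_2=H_2X_1$ to rewrite $X_1\cdots X_{i-1}P^{(1,i)}H_1$ as $P^{(1,i)}H_1X_2\cdots X_i$, and then $A_rP^{(1,i)}=A_r$ on the $(\mathbb C^n)^{\otimes r}$ side (since $P^{(1,i)}$ acts on the range tensor factors and $A_r$ absorbs it up to sign — here the sign is $+1$ after the relabeling, matching the symmetric hypothesis on $X$). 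This is the bookkeeping heart of the argument and the step I expect to be the main obstacle: one has to check that the $r-1$ surviving $P$-terms sum to exactly $(r-1)H_1X_2\cdots X_rY_2\cdots Y_r$ so that they cancel the explicit shift term, which requires tracking the relabeling of indices carefully. For the $Q^{(1,i)}$ pieces, I would move the leading $X_1$ leftmost so that each term becomes $A_r X_1 Q^{(1,i)}(\text{stuff})$ — again using $X_1H_2=H_2X_1$ and the commutativity of the entries of the symmetric matrix $X$ to free up $X_1$ — and then invoke Lemma \ref{AXQ-SXQ}, which gives $A_rX_1Q^{(1,i)}=0$ because $X$ is symmetric. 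Hence all $Q$-corrections vanish, and we are left with $A_rX_1\cdots X_rY_1\cdots Y_r$, proving \eqref{Turnbull}.

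Finally, for the specialization \eqref{capelli symmetric} I would apply both sides of \eqref{Turnbull} to a basis vector $e_{l_1}\otimes\cdots\otimes e_{l_r}$, pair against $e_{i_1}\otimes\cdots\otimes e_{i_r}$, and read off coefficients, using that the coefficient of $e_{i_1}\otimes\cdots\otimes e_{i_r}$ in $A_r Z_1\cdots Z_r e_{k_1}\otimes\cdots\otimes e_{k_r}$ is $\frac1{r!}\det(Z_{IK})$ as recorded just before Lemma \ref{AXQ-SXQ}. The scalar $(r-1),(r-2),\dots,0$ appears because on the left the $k$-th factor carries the coefficient $(r-k)$ from the telescoped product $A_r\prod_{k}(X_kY_k+(r-k)H_k)$ — equivalently, after antisymmetrization the Jucys–Murphy-type shift $z_k'\otimes H$ becomes the scalar $(r-k)$ for the sign representation — and the antisymmetry of the $e_{i}$-expansion forces the indices $J$ into increasing order, with the $\frac1{r!}$ factors matching on both sides. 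On the right, the product $X_1\cdots X_rY_1\cdots Y_r$ antisymmetrized gives $\sum_J \det(X_{IJ})\det(Y_{JL})$ after the usual Cauchy–Binet-type resummation over intermediate indices, which is exactly the stated identity.
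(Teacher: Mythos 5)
Your plan is essentially the paper's own proof: the same induction on $r$ via $A_r=A_rA_{r-1}$, the same commutation of $Y_1$ past $X_2,\dots,X_r$ using \eqref{turnbull-rela1}, the $P^{(1,i)}$-terms cancelling the explicit shift term, and Lemma \ref{AXQ-SXQ} killing the $Q^{(1,i)}$-terms; the derivation of $X_1H_2=H_2X_1$ for $n\ge4$ and the Cauchy--Binet readout at the end also match. One caveat on the step you flag as the main obstacle: your signs there are both flipped. The shift term produced by $(X_1Y_1+(r-1)H_1)$ is $+(r-1)A_rH_1X_2\cdots X_rY_2\cdots Y_r$, not $-(r-1)$, and $A_rP^{(1,i)}=-A_r$ (a transposition always picks up $\operatorname{sgn}=-1$ against the antisymmetrizer), so each $P$-piece contributes $-A_rH_1X_2\cdots X_rY_2\cdots Y_r$; the two minus signs you introduced compensate, so the cancellation you assert does hold, but your stated reason for the sign being $+1$ (``matching the symmetric hypothesis on $X$'') is spurious --- the symmetry of $X$ enters only through Lemma \ref{AXQ-SXQ} for the $Q$-terms, exactly as in the paper. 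Also, with the paper's normalized antisymmetrizers one has $A_rA_{r-1}=A_r$ with no $(r-1)!$ factor; that factor belongs to the $\mathcal{E}^{(r)}$ identity in Theorem \ref{Capelli immanant}, not here.
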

\begin{proof}
	It is sufficient to prove \eqref{Turnbull},
	equation \eqref{capelli symmetric} follows from \eqref{Turnbull}.
For $n\geq 4 $, relation \eqref{turnbull-rela1} implies $X_1H_2=H_2X_1$. Indeed, for any indices $i,j,k,l\in [n]$ with  $l\notin\{ i,j,k\}$   and $t\in [m]$, we have that
	\begin{equation}
		\begin{aligned}
			&[X_{ij},H_{kt}]=-[X_{ij},[X_{kl},Y_{lt}]]\\
			&=-[[X_{ij},X_{kl}],Y_{lt}]-[X_{kl},[X_{ij},Y_{lt}]]\\
						&=0.
		\end{aligned}
	\end{equation}

	We now prove \eqref{Turnbull} by induction on $r$.
	It is trivial for $r=1$.
Using the relation $A_r=A_rA_{r-1}$ and the induction hypothesis, where $A_{r-1}$ is the antisymetrizer on the indices $\{2,3,\ldots,r\}$, we have that
	\begin{equation}
		\begin{aligned}
			&A_r(XY+(r-1)H)_1\cdots (XY)_r \\
			&=A_r(XY+(r-1)H)_1 A_{r-1}(XY+(r-2)H)_2\cdots (XY)_r\\
			&=A_r(XY+(r-1)H)_1A_{r-1}X_2\cdots X_r Y_2\cdots Y_r\\
&=A_r(XY+(r-1)H)_1 X_2\cdots X_r Y_2\cdots Y_r.\\
		\end{aligned}
	\end{equation}
	
Applying the relations \eqref{turnbull-rela1}, we have that
\begin{align*}
				&A_r(XY+(r-1)H)_1X_2\cdots X_r Y_2\cdots Y_r\\
		&=A_rX_1\cdots X_r Y_1\cdots Y_r\\
		&\quad +A_r\sum_{i=2}^{r}X_1X_2\cdots X_{i-1} (Q^{(1,i)}+P^{(1,i)} )H_1X_{i+1}\cdots X_rY_2\cdots Y_r\\
		&\quad + (r-1) A_rH_1X_2\cdots X_rY_2\cdots Y_r.
\end{align*}
Similar to the proof of Theorem \ref{Capelli immanant}, for $2\leq i \leq r$ we have

\begin{equation}\label{eq symm}
	\begin{aligned}
            &A_rX_1X_2\cdots X_{i-1}P^{(1,i)}H_1\\
			&=A_rP^{(1,i)}H_1X_2\cdots X_{i-1}X_i\\
            &=-A_rH_1X_2\cdots X_{i-1}X_i.\\
	\end{aligned}
\end{equation}

Combining \eqref{eq symm} with Lemma \ref{AXQ-SXQ}, we have that
	\begin{equation}
		\begin{aligned}
			&A_r\sum_{i=2}^{r}X_1X_2\cdots X_{i-1} (Q^{(1,i)}+P^{(1,i)})H_1 X_{i+1}\cdots X_rY_2\cdots Y_r\\
			&\quad + (r-1)A_rH_1X_2\cdots X_rY_2\cdots Y_r\\
&=0.
\end{aligned}
	\end{equation}
This completes the proof.
\end{proof}

\begin{remark}
If $H=I$, Theorem \ref{sym-turnbull} is the Turnbull  identity \cite{Turnbull} for symmetric matrices.
For $H=hI$, Theorem \ref{sym-turnbull}  specializes to Turnbull's identity for symmetric matrices given in \cite{FZ} and \cite{CSS,CFR}.
\end{remark}

The following theorem is a generalization of Turnbull's identity for the permanents of
 anti-symmetric matrices.
\begin{theorem} \label{antisym-turnbull}
Let $X=(X_{ij}) \in \Mat_{n}(\mathcal A)$, $Y=(Y_{ij})\in \Mat_{n \times m}(\mathcal A)$. Suppose $X$ is antisymmetric with commutative entries and
$Y$ is arbitrary.  Assume  in  $\mathcal A\otimes  \mathrm{Hom}(\mathbb C^{m}\otimes \mathbb C^{n}, \mathbb C^{n}\otimes \mathbb C^{n})$
	\begin{equation}\label{antisym-rela1}
		Y_1X_2-X_2Y_1=(Q-P)H_1,
	\end{equation}
where $H$ is an $n\times m$ matrix. Entry-wise this relation is equivalent to
	\begin{equation}
		[X_{ij},Y_{kl}]=-(\delta_{ik}H_{jl}-\delta_{jk}H_{il}), \quad \quad 1\leq i,j,k \leq n, 1\leq l \leq m.
	\end{equation}
Moreover, for $n=2,3 $,  we assume that
	\begin{align}\label{antisym-rela111}
&  X_1H_2=H_2X_1.
	\end{align}
Then in $ \mathcal A \ot \mathrm{Hom}( (\mathbb C^m)^{\otimes r},  (\mathbb C^n)^{\otimes r})$ one has that
\begin{equation}\label{antisym-turnidentity}
	S_r(X_1Y_1+(r-1)H_1)\cdots (X_rY_r) =	S_r X_1\cdots X_r Y_1\cdots Y_r.
\end{equation}
 for any $r, n, m$.
	In particular,
	\begin{equation}\label{per anti}
		{\per}\big((XY)_{IL}+H_{IL} \diag(r-1,\ldots,0) \big) \\
		=\sum\limits_{J}\dfrac{1}{v(\alpha_{J})} {\per}(X_{IJ}){\per}(Y_{JL}),
	\end{equation}
where $I=\{i_1,\ldots ,i_r\} \subset[n]$ and $L=\{l_1,\ldots,l_r\}  \subset[m]$ are two sets with cardinality $r$, and the sum is over all indices of non-decreasing integers $J=(j_1,\ldots,j_r)$ in $ [n]$.
\end{theorem}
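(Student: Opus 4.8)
The proof follows the pattern of Theorem~\ref{sym-turnbull} verbatim, with the antisymmetrizer $A_r$ replaced by the symmetrizer $S_r$, the operator $Q+P$ replaced by $Q-P$, and $\det$ replaced by $\per$. It suffices to establish the operator identity \eqref{antisym-turnidentity} in $\mathbb C\mathfrak{S}_r\ot\mathrm{Hom}((\mathbb C^m)^{\otimes r},(\mathbb C^n)^{\otimes r})$: the permanent identity \eqref{per anti} is then obtained by applying both sides of \eqref{antisym-turnidentity} to $e_{l_1}\ot\cdots\ot e_{l_r}$ and comparing the coefficients of $e_{i_1}\ot\cdots\ot e_{i_r}$, exactly as \eqref{capelli williamson} was extracted from the operator identity of Theorem~\ref{Capelli immanant} in the first remark of Section~\ref{s:gen-capelli-identities} (the weight $r-k$ on the $k$-th factor producing the diagonal matrix $\diag(r-1,\dots,0)$, and the $\tfrac{1}{v(\alpha_J)}$ factor together with the restriction to non-decreasing $J$ accounting for the multiplicity with which each summand $\per(X_{IJ})\per(Y_{JL})$ is produced). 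First note that for $n\geq 4$ the hypothesis \eqref{antisym-rela111} is automatic: given $i,j,k\in[n]$ and $t\in[m]$, pick $l\in[n]\setminus\{i,j,k\}$; then $[X_{kl},Y_{lt}]=H_{kt}$ by \eqref{antisym-rela1}, and the Jacobi identity together with $[X_{ij},X_{kl}]=0$ and $[X_{ij},Y_{lt}]=0$ forces $[X_{ij},H_{kt}]=0$. Thus in all cases we may assume $X_1H_2=H_2X_1$.

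The plan is to prove \eqref{antisym-turnidentity} by induction on $r$, the case $r=1$ being trivial. For $r>1$, write $S_{r-1}$ for the symmetrizer on the factors $\{2,\dots,r\}$, so that $S_r=S_rS_{r-1}$ and $S_{r-1}$ commutes with every operator on the first factor. Applying the inductive hypothesis to the last $r-1$ factors gives
\begin{equation*}
S_r(X_1Y_1+(r-1)H_1)(X_2Y_2+(r-2)H_2)\cdots(X_rY_r)=S_r(X_1Y_1+(r-1)H_1)X_2\cdots X_rY_2\cdots Y_r.
\end{equation*}
Moving $Y_1$ to the right past $X_2,\dots,X_r$ by repeated use of \eqref{antisym-rela1}, the right-hand side becomes
\begin{align*}
&S_rX_1\cdots X_rY_1\cdots Y_r
+S_r\sum_{i=2}^{r}X_1\cdots X_{i-1}\bigl(Q^{(1,i)}-P^{(1,i)}\bigr)H_1X_{i+1}\cdots X_rY_2\cdots Y_r\\
&\qquad{}+(r-1)\,S_rH_1X_2\cdots X_rY_2\cdots Y_r,
\end{align*}
so it remains to show that the last two groups of terms cancel against each other.

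For the $P^{(1,i)}$-terms, the same braid computation as in the proof of Theorem~\ref{Capelli immanant} (pushing $P^{(1,i)}$ to the left and using \eqref{antisym-rela111}) shows $S_rX_1\cdots X_{i-1}P^{(1,i)}H_1=S_rP^{(1,i)}H_1X_2\cdots X_i=S_rH_1X_2\cdots X_i$, where the last equality now uses $S_rP^{(1,i)}=S_r$ in place of $A_rP^{(1,i)}=-A_r$; hence each $-P^{(1,i)}$-term equals $-S_rH_1X_2\cdots X_rY_2\cdots Y_r$, and the $r-1$ of them cancel the term $(r-1)S_rH_1X_2\cdots X_rY_2\cdots Y_r$. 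For the $Q^{(1,i)}$-terms, since $Q^{(1,i)}$ commutes with $X_2,\dots,X_{i-1}$ and, by \eqref{antisym-rela111}, $H_1$ commutes with $X_2,\dots,X_{i-1}$, each such term can be brought to the form $(S_rX_1Q^{(1,i)})\cdot(\,\cdots)$, whose first factor vanishes by the second identity of Lemma~\ref{AXQ-SXQ} since $X$ is anti-symmetric. This completes the induction, hence proves \eqref{antisym-turnidentity} and \eqref{per anti}.

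Since the argument is essentially a transcription of the symmetric case, the only genuine points of care are the sign reversal $S_rP^{(1,i)}=S_r$ versus $A_rP^{(1,i)}=-A_r$ — which is exactly what makes the combination $Q-P$ (rather than $Q+P$) the correct one here — and, for the passage to \eqref{per anti}, keeping track of the multiplicities $v(\alpha_J)$, since unlike a determinant a permanent does not vanish when two columns of $X_{IJ}$ coincide.
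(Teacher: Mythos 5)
Your proposal is correct and follows essentially the same route as the paper's own proof: induction on $r$ via $S_r=S_rS_{r-1}$, expansion by the commutation relation, cancellation of the $P^{(1,i)}$-terms against $(r-1)S_rH_1X_2\cdots X_rY_2\cdots Y_r$ using $S_rP^{(1,i)}=S_r$, and vanishing of the $Q^{(1,i)}$-terms by the second identity of Lemma~\ref{AXQ-SXQ}. Your treatment of the $n\geq 4$ reduction and of the multiplicities $v(\alpha_J)$ in passing to \eqref{per anti} is, if anything, slightly more explicit than the paper's.
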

\begin{proof}
	Equation \eqref{per anti} follows from \eqref {antisym-turnidentity}.
	By the argument in the proof of Theorem \ref{sym-turnbull}, we have that $X_1H_2=H_2X_1$ for $n\geq 4$.
	We then prove equation \eqref{antisym-turnidentity} by induction on $r$.
	The case  $r=1$ is clear. Let
	$S_{r-1}$  be the symmetrizer on the indices $\{2,\ldots,r\}$. Using the relation $	S_r=	S_r	S_{r-1}$ and  the induction hypothesis, we have that
	\begin{equation}
		\begin{aligned}
			&S_r(XY+(r-1)H)_1\cdots (XY)_r \\
			&=S_r(XY+(r-1)H)_1 S_{r-1}(XY+(r-2)H)_2\cdots (XY)_r\\
			&=S_r(XY+(r-1)H)_1 S_{r-1}X_2\cdots X_r Y_2\cdots Y_r\\
			&=S_r(XY+(r-1)H)_1X_2\cdots X_r Y_2\cdots Y_r.
		\end{aligned}
	\end{equation}
Applying the relations \eqref{antisym-rela1}, we have that
\begin{align*}
		&S_r(XY+(r-1)H)_1X_2\cdots X_r Y_2\cdots Y_r\\
		&=S_r X_1\cdots X_r Y_1\cdots Y_r\\
		&\quad +S_r\sum_{i=2}^{r}X_1X_2\cdots X_{i-1}(Q^{(1,i)}H_1-P^{(1,i)}H_1)X_{i+1}\cdots X_rY_2\cdots Y_r\\
		&\quad + (r-1) S_rH_1X_2\cdots X_rY_2\cdots Y_r .
\end{align*}
For $2\leq i \leq r$,  we have
\begin{equation}\label{eq antisym}
	\begin{aligned}
		&S_rX_1X_2\cdots X_{i-1}P^{(1,i)}H_1\\
		&=S_rH_1X_2\cdots X_{i-1}X_i.
	\end{aligned}
\end{equation}
Combining \eqref{eq antisym}  with Lemma \ref{AXQ-SXQ}, we deduce that
	\begin{equation}
		\begin{aligned}
			&S_r\sum_{i=2}^{r}X_1X_2\cdots X_{i-1}(Q^{(1,i)}H_1-P^{(1,i)}H_1)X_{i+1}\cdots X_rY_2\cdots Y_r\\
			&\quad + (r-1) S_rH_1X_2\cdots X_rY_2\cdots Y_r \\
&=0.
		\end{aligned}
	\end{equation}
This completes the proof.
\end{proof}

\begin{remark} Similar to the case of symmetric matrices, Theorem \ref{antisym-turnbull} specializes to Turnbull's identity  for anti-symmetric matrices \cite{UT,Turnbull} when $H=I$. Theorem \ref{antisym-turnbull}
recovers Turnbull's identity for anti-symmetric matrices given in \cite{FZ} and \cite{CSS,CFR} when $H$ is the diagonal matrix $hI$.
\end{remark}

\section{General Howe-Umeda-Kostant-Sahi identity}\label{s:gen-HUKS-identity}
In this section, we study the generalized Howe-Umeda-Kostant-Sahi identities for antisymmetric matrices which were conjectured by Caracciolo, Sokal and Sportiello \cite{CSS}.

The following theorem is a generalization of Howe-Umeda-Kostant-Sahi identity for antisymmetric matrices.
The special case of our identity ($h=1$, and $X$ is antisymmetric) recovers the classical Howe-Umeda-Kostant-Sahi identity  given in \cite{HU} and \cite{KS}.
\begin{theorem}\label{huks}
For even $n$, let $X=(X_{ij}), Y=(Y_{ij})\in \Mat_{n}(\mathcal A)$ with commutative entries.
Assume $Y$ is antisymmetric, and in  $\mathcal A\otimes  \mathrm{End}((\mathbb C^{n})^{\ot 2})$
	\begin{equation}\label{huks-rela1}
		Y_1X_2-X_2Y_1=h(P-Q),
	\end{equation}
where $h$ commutes with all entries of $X$ and $Y$. This relation can be written in term of entries as
	\begin{equation}
		[X_{ij},Y_{kl}]=-h(\delta_{jk}\delta_{il}-\delta_{ik}\delta_{jl}), \quad \quad 1\leq i,j,k,l \leq n.
	\end{equation}
	Then
	\begin{equation}\label{huks-identity}
		{\det}\big(XY+h \diag(n-2,n-3,\ldots,0,-1) \big) \\
		= {\det}(X){\det}(Y).
	\end{equation}
\end{theorem}

For the case of odd $n$, we have the following new Howe-Umeda-Kostant-Sahi type identity.

\begin{theorem}\label{odd-anti}
Let $n$ be an odd positive
	integer, and let $X=(X_{ij}), Y=(Y_{ij})\in \Mat_{n}(\mathcal A)$ both with commutative entries.
	Suppose either
$X$ or $Y$ is antisymmetric, and in  $\mathcal A \otimes  \mathrm{End}((\mathbb C^{n})^{\ot 2})$
	\begin{equation}\label{odd-anti-rela1}
		Y_1X_2-X_2Y_1=h(P-Q),
	\end{equation}
where $h$ commutes with all entries of $X$ and $Y$.
	In terms of entries this relation can be written as
	\begin{equation}
		[X_{ij},Y_{kl}]=-h(\delta_{jk}\delta_{il}-\delta_{ik}\delta_{jl}), \quad \quad 1\leq i,j,k,l \leq n.
	\end{equation}
	Then one has the following identity
      \begin{equation}\label{odd-anti-identity1}
		{\det}\big(XY+h \diag(n-1,n-2,\ldots,1,0) \big) \\
		= {\det}(X){\det}(Y)=0.
	 \end{equation}
\end{theorem}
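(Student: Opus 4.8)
The plan is to prove the two equalities of \eqref{odd-anti-identity1} in turn. The equality $\det X\,\det Y=0$ is elementary: whichever of $X,Y$ is anti-symmetric is an $n\times n$ skew matrix $Z$ with commuting entries, and $\det Z=\det Z^{t}=\det(-Z)=(-1)^{n}\det Z=-\det Z$ since $n$ is odd, so $2\det Z=0$; carried out in the integral domain $\mathbb Z[Z_{ij}:i<j]$ of characteristic $0$ this forces $\det Z=0$ there, hence in any commutative ring by specialisation. It then remains to prove $\det(XY+hD)=\det X\,\det Y$ with $D=\diag(n-1,n-2,\dots,1,0)$, which we approach by the antisymmetrizer/matrix method used for Theorems \ref{sym-turnbull}, \ref{antisym-turnbull} and, in the even case, Theorem \ref{huks}. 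We treat the case in which $Y$ is anti-symmetric, the case in which $X$ is anti-symmetric being entirely parallel.

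Working in $\mathbb C\mathfrak S_{n}\ot\mathrm{Hom}\bigl((\mathbb C^{n})^{\ot n},(\mathbb C^{n})^{\ot n}\bigr)$ with the normalized antisymmetrizer $A_{n}$, the target is the operator identity
\begin{equation*}
A_{n}\,(X_{1}Y_{1}+h(n-1))(X_{2}Y_{2}+h(n-2))\cdots(X_{n}Y_{n})=A_{n}\,X_{1}\cdots X_{n}\,Y_{1}\cdots Y_{n}.
\end{equation*}
Granting it, one extracts the coefficient of $e_{1}\ot\cdots\ot e_{n}$ from each side applied to $e_{1}\ot\cdots\ot e_{n}$: the left side yields $\tfrac1{n!}\det(XY+hD)$ (the column determinant, which coincides with the usual one here, just as in Theorem \ref{huks}), while on the right $X_{1}\cdots X_{n}$ and $Y_{1}\cdots Y_{n}$ commute with $A_{n}$ and act on the line $A_{n}(\mathbb C^{n})^{\ot n}=\Lambda^{n}\mathbb C^{n}$ by $\det X$ and $\det Y$, so the right side yields $\tfrac1{n!}\det X\,\det Y$. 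Together with the first step this gives $\det(XY+hD)=\det X\,\det Y=0$, as wanted.

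To prove the operator identity one works directly at $r=n$; the one-dimensionality of $\Lambda^{n}\mathbb C^{n}$ is essential, so there is no reduction to fewer tensor factors. Using the relation in the form $Y_{1}X_{i}-X_{i}Y_{1}=h(P^{(1,i)}-Q^{(1,i)})$ to bring all of $Y_{1},\dots,Y_{n}$ to the right of all of $X_{1},\dots,X_{n}$ turns the left side of the displayed identity into $A_{n}\,X_{1}\cdots X_{n}\,Y_{1}\cdots Y_{n}$ together with ``$P$-corrections'' and ``$Q$-corrections''. The $P$-corrections are handled exactly as in the proof of Theorem \ref{sym-turnbull}: the telescoping relation $X_{1}\cdots X_{i-1}P^{(1,i)}=P^{(1,i)}X_{2}\cdots X_{i}$ together with $A_{n}P^{(1,i)}=-A_{n}$ makes the $P$-terms add up to $-h(n-1)\,A_{n}X_{2}\cdots X_{n}Y_{2}\cdots Y_{n}$, which cancels the $h(n-1)$-term, and the same bookkeeping accounts for the remaining constants $h(n-k)$ in the $k$-th slot.

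The hard part will be the $Q$-corrections $-h\sum_{i}A_{n}X_{1}\cdots X_{i-1}Q^{(1,i)}X_{i+1}\cdots X_{n}Y_{2}\cdots Y_{n}$. Since $X$ is not symmetric these do not vanish termwise --- precisely the feature that separates the Howe--Umeda--Kostant--Sahi situation from Turnbull's, and the place where the parity of $n$ enters. The tools available are the partial-transpose identities $X_{1}Q^{(1,i)}=(X^{t})_{i}Q^{(1,i)}$ and $Q^{(1,i)}Y_{i}=-Q^{(1,i)}Y_{1}$ (the latter from anti-symmetry of $Y$), the vanishing $A_{n}Q^{(1,i)}=0$ (the case $X=I$ of Lemma \ref{AXQ-SXQ}), and $\dim\Lambda^{n}\mathbb C^{n}=1$; with these one must show that the $Q$-corrections reorganize with no net contribution. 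The decisive point is how the $Q$'s involving the last tensor slot interact with the full antisymmetrizer $A_{n}$: for even $n$ this is exactly what produces the extra $-h\,I$ turning $\diag(n-1,\dots,0)$ into the $\diag(n-2,\dots,0,-1)$ of Theorem \ref{huks}, whereas for odd $n$ the corresponding contribution carries the opposite sign and cancels, leaving the unshifted diagonal $\diag(n-1,\dots,0)$. I expect this $Q$-term analysis --- in essence, an explanation of why the Howe--Umeda--Kostant--Sahi ``anomaly'' is present for even $n$ but absent for odd $n$ --- to be the one genuinely nontrivial step. An alternative worth trying is to deduce the odd case from Theorem \ref{huks} directly, by bordering $X$ and $Y$ with one extra row and column built from auxiliary commuting variables so as to pass to even size $n+1$, applying Theorem \ref{huks} there, and reading off the $n\times n$ statement --- the bordered anti-symmetric matrix then having even size and determinant degenerating to $0$.
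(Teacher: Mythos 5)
Your setup is right and matches the paper's strategy --- the reduction to the operator identity $A_n(X_1Y_1+h(n-1))\cdots(X_nY_n)e_{1,\dots,n}=A_nX_1\cdots X_nY_1\cdots Y_ne_{1,\dots,n}$, the extraction of the coefficient of $e_1\ot\cdots\ot e_n$, the elementary proof that $\det X\det Y=0$, and the observation that the $P$-corrections telescope against the diagonal shifts are all correct. But the proposal stops exactly where the real work begins: you explicitly defer the proof that the $Q$-corrections contribute nothing (``I expect this $Q$-term analysis \dots to be the one genuinely nontrivial step''), and that analysis is the entire mathematical content of the theorem. Two concrete pieces are missing. First, the bookkeeping of the $Q$-corrections is itself nontrivial: when $Y_1$ is commuted past the $X$'s in the presence of $Q$-factors produced at earlier stages of the induction, new factors $Q^{(1,j)}$ accumulate against old ones, and one needs the identity $A_3(X_iQ^{(i,j)}Q^{(j,k)}-X_jQ^{(j,k)})=0$ of Lemma \ref{claim-lemma} to reorganize them; this is what lets the paper's Lemma \ref{odd-claim} put the total correction in the closed form $A_m\sum_k(-h)^k s_{k-1}$ with $s_{k-1}$ a sum of terms $(\prod X)\,Q^{(i_{\pi_1},i_{\pi_2})}\cdots Q^{(i_{\pi_{2k-1}},i_{\pi_{2k}})}(\prod Y)$. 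Second, the actual vanishing $A_ns_{k-1}e_{1,\dots,n}=0$ is where the hypothesis enters, and the two cases are \emph{not} ``entirely parallel'' as you assert: when $X$ is antisymmetric the paper pairs basis vectors $e_{I,J,J}$ with $e_{I,J^c,J^c}$ so that the coefficient becomes $\det(X_{I,J}^{I,J^c})+(-1)^n\det(X_{I,J}^{I,J^c})$, which vanishes precisely because $n$ is odd; when instead $Y$ is antisymmetric the coefficient is identified, via the Laplace-type Pfaffian expansion of Theorem \ref{Laplace formulas for Pfaffian}, with the Pfaffian of a bordered matrix whose determinant is forced to be zero. Neither argument, nor any substitute for it, appears in your proposal.

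The alternative you float --- bordering $X$ and $Y$ to even size $n+1$ and invoking Theorem \ref{huks} --- is also not carried out, and it faces real obstacles: one must extend the commutation relation \eqref{odd-anti-rela1} to the bordered matrices, keep the bordered $Y$ antisymmetric, and reconcile the shifted diagonal $h\,\diag(n-1,\dots,0,-1)$ of the even-size identity with the unshifted $h\,\diag(n-1,\dots,1,0)$ you need in size $n$; none of this is obviously possible. As it stands the proposal is a correct reduction plus a statement of the problem, not a proof.
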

These two theorems are proved in the following subsections.

\subsection{Identities for Pfaffians}
In this section we recall some identities for Pfaffians which will be used to  prove  Theorems \ref{huks} and \ref{odd-anti}.

Let $X$ be an $2m\times 2m$ antisymmetric matrix with commutative entries.
The  Pfaffian of  $X$ is defined by
\begin{align*}
	\Pf(X)
	=\frac{1}{2^mm!}\sum_{\sigma\in \mathfrak{S}_{2m}}(-1)^{l(\sigma)}x_{\sigma(1)\sigma(2)}x_{\sigma(3)\sigma(4)}\cdots x_{\sigma(2m-1)\sigma(2m)}.
\end{align*}

Let $I=\{i_1,i_2,\ldots,i_{2r}\}$ be a subset of $\{1,2,\ldots,2m\}$ with $i_1<i_2<\ldots<i_{2r}$. Denote the complement of $I$ by $I^c$. And
denote by $X_I$ the matrix obtained from $X$ by picking up the rows and columns indexed by $I$. We denote the Pfaffian of $X_I$ by
$\Pf (X_I)=(i_1,i_2,\dots,i_{2r})$.

%

Denote by $\Pi_m$ the set of $2$-shuffles, consisting of all $\sigma$ in $\mathfrak{S}_{2m}$ such that
$\sigma_{2k-1}<\sigma_{2k}$, $1 \leq k\leq m $ and $\sigma_{1}<\sigma_{3}<\ldots <\sigma_{2m-1}$.
For any set $I=\{i_1,i_2,\ldots,i_{2r}\}$ with $i_1<i_2<\ldots<i_{2r}$,
we denote by $\Pi_{I}$ the set of $2$-shuffles on the indices $\{i_1,i_2,\dots,i_{2r} \}$.

Then  Pfaffian can be computed by
\begin{equation}
	\begin{split}
		\Pf (X)&=\sum_{\pi \in\Pi_m}(-1)^{l(\pi)}(i_1,j_1)(i_2,j_2)\cdots(i_m,j_m)\\
		&=\sum_{j=2}^{2m}(-1)^{j-2}(1,j)(2,3,\ldots,\hat{j},\ldots,2m),
	\end{split}
\end{equation}
where $\pi_{2k-1}=i_{k}$ and $\pi_{2k}=j_{k}$ for $1 \leq k \leq m$ and $(2,3,\ldots,\hat{j},\ldots,2m)$ obtained from
$(2,3,\ldots,2m)$ by deleting $j$.

We now recall the Laplace-type expansion formulas for Pfaffian \cite{Cai,SO}.
\begin{theorem}\label{Laplace formulas for Pfaffian}
	Let $m$ and $n$ be nonnegative integers such that $m+n$ is even. For an $m\times m$ antisymmetric matrix $Z=(z_{ij})$, an $n\times n$ antisymmetric matrix $Z'=(z'_{ij})$, and an $m\times n$ matrix $W=(w_{ij})$, we have
	\begin{equation}
		\Pf\begin{pmatrix}
			Z & W\\
			-W^{t} & Z'
		\end{pmatrix}
		= \sum_{I,J} \varepsilon(I,J)\Pf(Z_{I})\Pf(Z'_{J})\det(W_{[m]\setminus I,[n]\setminus J}),
	\end{equation}
	where the sum is taken over all pairs of even-element subsets $(I,J)$ such that $I \subset [m], J \subset [n]$ and $m-\mid I\mid = n-\mid J \mid $, and the coefficient $\varepsilon(I,J)$ is given by
	\begin{equation*}
		\varepsilon(I,J)=(-1)^{\sum(I)+\sum(J)+\binom{m}{2}+\binom{n}{2}+\binom{k}{2}}, \ k=m-\mid I\mid = n-\mid J \mid,
	\end{equation*}
where we denote $\sum(I)=\sum_{i \in I} i$.
\end{theorem}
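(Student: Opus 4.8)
The plan is to read off both sides inside the exterior algebra $\Lambda=\Lambda(\mathbb{C}^{m}\oplus\mathbb{C}^{n})$, where $e_1,\dots,e_m$ is a basis of the first summand and $f_1,\dots,f_n$ a basis of the second; write $2p=m+n$. Equivalently to the combinatorial definition recalled above, attaching to an antisymmetric matrix its alternating $2$-form, the Pfaffian is the coefficient of the top monomial in $\tfrac{1}{p!}$ times the $p$-th power of that form; more generally $\tfrac{1}{a!}\omega_Z^{\,a}=\sum_{|I|=2a}\Pf(Z_I)\,e_I$ and $\tfrac{1}{c!}\omega_{Z'}^{\,c}=\sum_{|J|=2c}\Pf(Z'_J)\,f_J$, where $\omega_Z=\sum_{i<j}z_{ij}\,e_i\wedge e_j$, $\omega_{Z'}=\sum_{i<j}z'_{ij}\,f_i\wedge f_j$, and $e_I=e_{i_1}\wedge\cdots\wedge e_{i_{2a}}$ for $I=\{i_1<\cdots<i_{2a}\}$ (and similarly $f_J$). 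Setting $\omega_W=\sum_{i,j}w_{ij}\,e_i\wedge f_j$, the $2$-form of $M=\bigl(\begin{smallmatrix}Z&W\\-W^{t}&Z'\end{smallmatrix}\bigr)$ is $\omega=\omega_Z+\omega_W+\omega_{Z'}$, and $\Pf(M)$ is the coefficient of $e_1\wedge\cdots\wedge e_m\wedge f_1\wedge\cdots\wedge f_n$ in $\tfrac{1}{p!}\omega^{\,p}$.

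First I would use that $\omega_Z,\omega_W,\omega_{Z'}$ all have even degree, hence commute, so the multinomial theorem gives $\tfrac{1}{p!}\omega^{\,p}=\sum_{a+b+c=p}\tfrac{1}{a!}\omega_Z^{\,a}\cdot\tfrac{1}{b!}\omega_W^{\,b}\cdot\tfrac{1}{c!}\omega_{Z'}^{\,c}$. The next ingredient is the elementary identity
\[
\frac{1}{b!}\,\omega_W^{\,b}=(-1)^{\binom{b}{2}}\sum_{\substack{I'\subseteq[m],\,J'\subseteq[n]\\ |I'|=|J'|=b}}\det\!\bigl(W_{I',J'}\bigr)\,e_{I'}\wedge f_{J'},
\]
obtained by expanding $\omega_W^{\,b}$, sorting the commuting degree-two blocks $e_i\wedge f_j$ by their $e$-index, pulling all the $e$'s to the front (which contributes $(-1)^{0+1+\cdots+(b-1)}=(-1)^{\binom{b}{2}}$) and recognizing a $b\times b$ determinant. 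Substituting the three expansions into the multinomial sum and keeping only the coefficient of the top monomial forces $I$ and $I'$ to partition $[m]$ and $J$ and $J'$ to partition $[n]$; hence $I'=[m]\setminus I$, $J'=[n]\setminus J$, the sets $I,J$ have even cardinality, and $b=m-|I|=n-|J|$, which is exactly the summation range of the asserted formula with $k=b$.

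Finally comes the sign bookkeeping, which I expect to be the only real obstacle. Each surviving term is
\[
(-1)^{\binom{b}{2}}\,\Pf(Z_I)\,\Pf(Z'_J)\,\det\!\bigl(W_{[m]\setminus I,\,[n]\setminus J}\bigr)\cdot\bigl(e_I\wedge e_{[m]\setminus I}\wedge f_{[n]\setminus J}\wedge f_J\bigr),
\]
and the shuffle identities $e_I\wedge e_{[m]\setminus I}=(-1)^{\sum(I)-\binom{|I|+1}{2}}\,e_1\wedge\cdots\wedge e_m$ and $f_{[n]\setminus J}\wedge f_J=(-1)^{\sum([n]\setminus J)-\binom{k+1}{2}}\,f_1\wedge\cdots\wedge f_n$ turn the last wedge into the top monomial times an explicit power of $-1$. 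Accumulating the three exponents, substituting $\sum([n]\setminus J)=\binom{n+1}{2}-\sum(J)$ together with $m=2a+k$, $n=2c+k$, and simplifying the binomial coefficients modulo $2$ via $\binom{x+y}{2}=\binom{x}{2}+\binom{y}{2}+xy$, the total sign should collapse to $\varepsilon(I,J)=(-1)^{\sum(I)+\sum(J)+\binom{m}{2}+\binom{n}{2}+\binom{k}{2}}$. The identities to be checked along the way are $\binom{b}{2}+\binom{b+1}{2}\equiv b$, $\binom{2a+1}{2}\equiv a$, and $\binom{m}{2}+\binom{k}{2}\equiv a\pmod 2$ when $m=2a+k$; with these in hand the claimed Laplace expansion follows. (A fully equivalent alternative is a purely combinatorial argument: split each perfect matching of $[m+n]$ into its part inside $[m]$, its part inside the complementary block, and its crossing part, observe that the crossing part is a bijection from $[m]\setminus I$ onto $[n]\setminus J$, and verify that the sign of the associated $2$-shuffle factors into those three pieces times $\varepsilon(I,J)$; the sign analysis is the same computation.)
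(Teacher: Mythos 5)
The paper does not actually prove this statement: it is recalled as a known Laplace-type expansion for Pfaffians with citations to Caianiello and Okada, so there is no in-paper argument to compare against. Your exterior-algebra proof is a correct and self-contained substitute. The three ingredients all check out: the expansion $\tfrac{1}{a!}\omega_Z^{\,a}=\sum_{|I|=2a}\Pf(Z_I)\,e_I$ is the standard form-theoretic characterization of sub-Pfaffians; the identity $\tfrac{1}{b!}\omega_W^{\,b}=(-1)^{\binom{b}{2}}\sum\det(W_{I',J'})\,e_{I'}\wedge f_{J'}$ has the right sign (moving the $t$-th $e$ past $t-1$ $f$'s costs $0+1+\cdots+(b-1)=\binom{b}{2}$ transpositions) and the right $b!$ cancellation; and extracting the top coefficient does force $I'=[m]\setminus I$, $J'=[n]\setminus J$ with $k=b$. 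I verified the sign bookkeeping independently: the accumulated exponent is $\binom{b}{2}+\sum(I)-\binom{2a+1}{2}+\sum([n]\setminus J)-\binom{b+1}{2}$, and using $\sum([n]\setminus J)=\binom{n+1}{2}-\sum(J)$, $\binom{b}{2}+\binom{b+1}{2}\equiv b$, $\binom{2a+1}{2}\equiv a$, $\binom{n+1}{2}\equiv\binom{n}{2}+b\pmod 2$ (since $n=2c+b$), and $\binom{2a+b}{2}\equiv a+\binom{b}{2}\pmod 2$, it does collapse to $\sum(I)+\sum(J)+\binom{m}{2}+\binom{n}{2}+\binom{k}{2}$ as claimed. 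The only thing I would ask you to make explicit in a final write-up is a proof (or precise reference) for the sub-Pfaffian expansion of $\omega_Z^{\,a}$, which you currently assert; everything else is complete.
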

\begin{corollary}
	Suppose that $m+n$ is even, if $Z=(z_{ij})$ be an $m\times m$ antisymmetric matrix and  $W=(w_{ij})$ is an $m\times n$ matrix , we have
	\begin{equation}
		\Pf\begin{pmatrix}
			Z & W\\
			-W^{t} & 0
		\end{pmatrix}
		= \left\{ \begin{aligned}
			&\sum_{I} (-1)^{\sum(I)+\binom{m}{2}}\Pf(Z_{I})\det(W_{[m]\setminus I,[n]})\ &if\ m>n,\\
			&(-1)^{\binom{m}{2}}\det(W) \ &if\ m=n,\\
			&0 \ &if\ m<n,\\
		\end{aligned} \right.
	\end{equation}
	where $I$ runs over all $(m-n)$-element subsets of $[n]$.
\end{corollary}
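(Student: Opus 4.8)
The plan is to obtain the corollary as a direct specialization of Theorem \ref{Laplace formulas for Pfaffian}: set the $n\times n$ antisymmetric matrix $Z'$ equal to the zero matrix $0_{n\times n}$. The only extra ingredient is the elementary fact that the Pfaffian of a zero antisymmetric matrix vanishes unless the matrix has size zero, i.e. $\Pf(0_{k\times k})=0$ for every even $k\geq 2$ while $\Pf(0_{0\times 0})=1$ by convention. Consequently, in the expansion
\[
\Pf\begin{pmatrix} Z & W\\ -W^{t} & 0\end{pmatrix}=\sum_{I,J}\varepsilon(I,J)\,\Pf(Z_{I})\,\Pf\big(0_{|J|\times|J|}\big)\det\big(W_{[m]\setminus I,\,[n]\setminus J}\big),
\]
every summand with $J\neq\emptyset$ drops out, so only the terms with $J=\emptyset$ survive.

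For $J=\emptyset$ the balance condition $m-|I|=n-|J|$ of the theorem becomes $|I|=m-n$; since $m+n$ is even this number is even, and the admissible $I$ are precisely the $(m-n)$-element subsets of $[m]$ --- of which there are none when $m<n$. With $J=\emptyset$ one has $k=m-|I|=n$, and since $\sum(\emptyset)=0$ and $2\binom{n}{2}$ is even, the sign collapses to $\varepsilon(I,\emptyset)=(-1)^{\sum(I)+\binom{m}{2}}$. Substituting $\Pf(0_{0\times 0})=1$ and $W_{[m]\setminus I,[n]\setminus\emptyset}=W_{[m]\setminus I,[n]}$ then gives exactly the claimed sum in the case $m>n$; when $m=n$ the only admissible set is $I=\emptyset$, so the sum has the single term $(-1)^{\binom{m}{2}}\det(W)$; and when $m<n$ the index set is empty, whence the Pfaffian is $0$.

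There is no real obstacle in this argument --- it is a bookkeeping reduction of the Laplace-type formula. The only points that warrant a moment's attention are the empty-Pfaffian convention $\Pf(0_{0\times0})=1$ and the sign simplification $\varepsilon(I,\emptyset)=(-1)^{\sum(I)+\binom{m}{2}}$; one should also record the parity check that $m+n$ even forces $m\equiv n\pmod 2$, so that $m-n$ is even and the submatrices $Z_I$ with $|I|=m-n$ legitimately possess Pfaffians, which keeps the three cases mutually consistent.
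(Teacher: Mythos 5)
Your proposal is correct and is exactly the intended derivation: the paper states this as an immediate consequence of Theorem \ref{Laplace formulas for Pfaffian} with no separate argument, and specializing $Z'=0$ so that only the $J=\emptyset$ terms survive (with the convention $\Pf$ of the empty matrix equal to $1$ and the sign collapsing to $(-1)^{\sum(I)+\binom{m}{2}}$ because $2\binom{n}{2}$ is even) is precisely that consequence. You also correctly read the admissible $I$ as $(m-n)$-element subsets of $[m]$ rather than of $[n]$ as misprinted in the statement.
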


If $X$ is an $n \times n$ antisymmetric matrix and $U$ is an $n \times n$ matrix, then
\begin{equation}
	\Pf(U^{t}XU)=\det(U)\Pf(X).
\end{equation}

For any $\si \in \mathfrak{S}_n$, one has that
\begin{equation}\label{permutation Pf}
 \Pf((X_{\si(i),\si(j)})_{1\leq i,j \leq n})=sgn(\si)\Pf((X_{i,j})_{1\leq i,j \leq n}).
\end{equation}

We denote $e_{i_1} \ot \cdots \ot e_{i_{2m}}$ by $e_{i_1,i_2, \ldots  ,i_{2m}}$.
Recalling the transposition $P$ \eqref{e:P} and transposed transposition $Q$ \eqref{e:Q}, it is easy to see that
\begin{equation}\label{eq pf}
	\begin{aligned}
		&\sum_{\sigma\in \Pi_{m}} sgn(\sigma)Q^{(1,2)}\cdots   Q^{(2m-1,2m)}X_{2}\cdots X_{2m} P^{\si^{-1}}e_{i_1,i_2, \ldots  ,i_{2m}} \\
		&=\Pf(X_{I})\sum_{I'} 
		e_{i'_1,i'_1,i'_3,i'_3, \ldots ,i'_{2m-1}, i'_{2m-1}},
	\end{aligned}
\end{equation}
where the sum is over all $I'=(i'_1,i'_3, \ldots  ,i'_{2m-1})$ with $i'_{2k-1} \in [n]$ for any $1 \leq k \leq m$.

We denote
\begin{equation}
	F_{m}=\sum_{\sigma\in \Pi_{m}} sgn(\sigma) P^{\si^{-1}}.
\end{equation}
Then \eqref{eq pf} can be written as

\begin{equation}
	\begin{aligned}
		& Q^{(1,2)}\cdots   Q^{(2m-1,2m)}X_{2}\cdots X_{2m}  F_m e_{i_1,i_2, \ldots  ,i_{2m}} \\
		&=\Pf(X_{I}) \sum_{I'}
		e_{i'_1,i'_1,i'_3,i'_3, \ldots , i'_{2m-1}, i'_{2m-1}}.
	\end{aligned}
\end{equation}


We denote $e_{I,J,\ldots,K}=e_{i_1, \ldots  , i_{r},j_1,\ldots,j_s,\ldots,k_1,\ldots,k_t}$ for any sets  $I=\{i_1<i_2<\ldots<i_r\}$,  $J=\{j_1<j_2<\ldots<j_s\}$ and $K=\{k_1<k_2<\ldots<k_t\}$.
We also denote by  $inv(I,J,\ldots, K)$  the inversion number of $\{I,J,\ldots,K\}$ and denote its sign by
$sgn(I,J,\ldots,K)=(-1)^{inv(I,J,\ldots,K)}$.

Let
\begin{equation}
	G_{m}e_{1,2,\ldots,n}=\sum_{I \subset [n], \atop \mid I \mid =2m } sgn(I,I^c) e_{I,I^c}.
\end{equation}
In fact, for any $I=\{i_1<i_2<\ldots<i_{2m-1}<i_{2m}\} \subset [n]$, there exists only one permutation $\tau_{I} \in  \mathfrak{S}_{n}$ such that $\tau^{-1}(k)=i_k$  for any $1 \leq k \leq 2m$ and $\tau^{-1}(2m+1)<\tau^{-1}(2m+2)<\ldots<\tau^{-1}(n-1)<\tau^{-1}(n)$. Then we have that
\begin{equation}
	G_{m}=\sum_{I \subset [n], \atop \mid I \mid =2m } sgn(\tau_{I}) P^{\tau_{I}}.
\end{equation}

The following lemma will be used to prove Lemma  $\ref{huks-lemma1}$ and   \ref{odd-claim}  .
\begin{lemma}\label{claim-lemma} For positive integer $n$, let 
$X=(X_{ij})\in \Mat_{n }(\mathcal A)$, then in $  \mathcal A\ot \mathrm{End}( (\mathbb C^n)^{\otimes n})$
	\begin{equation}\label{operator1}
		\begin{aligned}
			A_{3}(X_iQ^{(i,j)}Q^{(j,k)}-X_jQ^{(j,k)})=0,
		\end{aligned}
	\end{equation}
	for any $1\leq i<j<k\leq n$, where $A_3$ is  the antisymetrizer on the indices $\{i,j,k\}$.
	
	Equivalently,
	\begin{equation}\label{operator2}
		\begin{aligned}
			A_3(X_iQ^{(i,k)}Q^{(j,k)}-X_kQ^{(j,k)})=0.
		\end{aligned}
	\end{equation}
\end{lemma}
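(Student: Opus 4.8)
The plan is to verify the operator identity $A_3(X_iQ^{(i,j)}Q^{(j,k)}-X_jQ^{(j,k)})=0$ by applying both operators to an arbitrary basis vector $e_{a_1}\otimes\cdots\otimes e_{a_n}$ and checking that the images agree; since $A_3$ only touches the three tensor slots labelled $i<j<k$ while $X_i$, $Q^{(i,j)}$, $Q^{(j,k)}$ also act only on those slots, it suffices to work on $(\mathbb C^n)^{\otimes 3}$ with the three factors named $i,j,k$ and a generic input $e_a\otimes e_b\otimes e_c$. First I would record the action of $Q$ on basis vectors: from the formula $Q=\sum_{p,q}E_{pq}\otimes E_{pq}$ in \eqref{e:Q}, one has $Q(e_a\otimes e_b)=\delta_{ab}\sum_{l}e_l\otimes e_l$, so $Q^{(j,k)}(e_a\otimes e_b\otimes e_c)=\delta_{bc}\sum_l e_a\otimes e_l\otimes e_l$, and then $Q^{(i,j)}$ applied to that gives $\delta_{bc}\sum_l\delta_{al}\sum_{l'}e_{l'}\otimes e_{l'}\otimes e_l = \delta_{bc}\sum_{l'}e_{l'}\otimes e_{l'}\otimes e_a$. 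Applying $X_i$ (acting in slot $i$) to the latter produces $\delta_{bc}\sum_{p,l'}X_{pl'}\,e_{p}\otimes e_{l'}\otimes e_a$.

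Next I would compute the other term: $Q^{(j,k)}(e_a\otimes e_b\otimes e_c)=\delta_{bc}\sum_l e_a\otimes e_l\otimes e_l$, and then $X_j$ acting in slot $j$ gives $\delta_{bc}\sum_{q,l}X_{ql}\,e_a\otimes e_q\otimes e_l$. So the difference $(X_iQ^{(i,j)}Q^{(j,k)}-X_jQ^{(j,k)})(e_a\otimes e_b\otimes e_c)$ equals $\delta_{bc}$ times $\sum_{p,l'}X_{pl'}e_p\otimes e_{l'}\otimes e_a - \sum_{q,l}X_{ql}e_a\otimes e_q\otimes e_l$. Renaming summation indices in the first sum as $e_p\otimes e_{l'}\otimes e_a$ with weight $X_{pl'}$, I observe that the transposition $(i,k)$ of the first and third tensor slots sends $e_p\otimes e_{l'}\otimes e_a \mapsto e_a\otimes e_{l'}\otimes e_p$, which (after relabelling $p\leftrightarrow l'$ scope) matches $-$ the structure of the second sum up to the sign $\mathrm{sgn}((i,k))=-1$; more precisely one checks $P^{(i,k)}$ applied to the first sum gives $\sum_{p,l'}X_{pl'}e_a\otimes e_{l'}\otimes e_p=\sum_{q,l}X_{lq}e_a\otimes e_q\otimes e_l$, so the two sums are related by $P^{(i,k)}$ composed with the transpose $X\mapsto X^t$ in the entry pattern. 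The key point is therefore that the difference of the two tensors lies in the image of $1-P^{(i,k)}$ after accounting for the index swap, but that is not quite automatic because of the transpose; the cleaner route is to note instead that the first tensor $T_1:=\sum_{p,l'}X_{pl'}e_p\otimes e_{l'}\otimes e_a$ and the second $T_2:=\sum_{q,l}X_{ql}e_a\otimes e_q\otimes e_l$ satisfy $P^{(i,j)}T_1 = \sum X_{pl'}e_{l'}\otimes e_p\otimes e_a$ and $P^{(j,k)}T_2=\sum X_{ql}e_a\otimes e_l\otimes e_q$, and after applying $A_3$ — which antisymmetrizes over all six orderings of the three slots and kills anything symmetric in two of them — both $A_3T_1$ and $A_3T_2$ reduce to the same alternating sum $\sum_{p,l'}X_{pl'}\,A_3(e_p\otimes e_{l'}\otimes e_a)$ once one uses that $A_3$ is invariant (up to sign) under permuting slots and that in each case the "free" index $a$ can be rotated into any position at the cost of a sign that $A_3$ absorbs. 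Carrying this out shows $A_3T_1=A_3T_2$, hence $A_3(T_1-T_2)=0$, which is precisely \eqref{operator1}.

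For the equivalence with \eqref{operator2}, I would simply relabel: conjugating \eqref{operator1} by the transposition $(j,k)$ (which commutes with $A_3$ up to sign and fixes the antisymmetrizer), and using $Q^{(i,j)} \leftrightarrow Q^{(i,k)}$, $X_j\leftrightarrow X_k$ under this relabelling, turns \eqref{operator1} into \eqref{operator2} directly; alternatively one runs the same basis-vector computation with the roles of $j$ and $k$ interchanged. I expect the main obstacle to be purely bookkeeping: keeping track of which tensor slot each operator acts on and which Kronecker deltas survive, together with the sign produced when $A_3$ reorders the slots so that the two expressions become manifestly equal. No deep input is needed — only the explicit formula \eqref{e:Q} for $Q$, the definition of $A_r$, and the fact that $A_3$ annihilates any tensor symmetric under a transposition of two of its three active slots.
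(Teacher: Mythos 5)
Your proposal is correct and follows essentially the same route as the paper: restrict to the three active tensor slots, act on a basis vector, use $Q(e_a\otimes e_b)=\delta_{ab}\sum_l e_l\otimes e_l$, and observe that the two resulting tensors $T_1=\sum_{p,q}X_{pq}\,e_p\otimes e_q\otimes e_a$ and $T_2=\sum_{p,q}X_{pq}\,e_a\otimes e_p\otimes e_q$ are identified by $A_3$ (the paper performs exactly this step via the even permutation $P^{(2,3)}P^{(1,2)}$ after first conjugating $(i,j,k)$ to $(1,2,3)$). The one phrase to tighten is ``a sign that $A_3$ absorbs'': since $A_3P^{\sigma}=\operatorname{sgn}(\sigma)A_3$, the cancellation works precisely because the slot permutation carrying $T_1$ to $T_2$ is the even $3$-cycle (sign $+1$), whereas a single transposition would yield $-A_3$ and the argument would fail --- as you yourself noticed when the $P^{(i,k)}$ route forced a transpose of $X$.
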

\begin{proof}
	The equation \eqref{operator2} follows from  \eqref{operator1} by relation $P^{(j,k)}Q^{(j,k)}=Q^{(j,k)}$. For any $1\leq i<j<k\leq n$, we have that
	\begin{equation}\label{lemm-term1}
		\begin{aligned}
			&A_3(X_iQ^{(i,j)}Q^{(j,k)}-X_jQ^{(j,k)})\\
			&=P^{(3,k)}P^{(2,j)}P^{(1,i)}A_3(X_1Q^{(1,2)}Q^{(2,3)}-X_2Q^{(2,3)})P^{(1,i)}P^{(2,j)}P^{(3,k)},
		\end{aligned}
	\end{equation}
where  $A_3$ is  the antisymetrizer on the indices $\{1,2,3\}$.
	So for \eqref{operator1}, it is sufficient to show that
	\begin{equation}\label{operator3}
		A_3(X_1Q^{(1,2)}Q^{(2,3)}-X_2Q^{(2,3)})=0,
	\end{equation}
which can be checked by showing that
	\begin{equation}
		A_3(X_1Q^{(1,2)}Q^{(2,3)}-X_2Q^{(2,3)})e_{k_1,k_2,k_3}=0
	\end{equation}
	for any $k_1,k_2,k_3$.
	
	In fact, if $k_2\neq k_3$, then $Q^{(2,3)}e_{k_1,k_2,k_3}=0$.
	If $k_2= k_3$, we have that
	\begin{equation}
		\begin{aligned}
			&A_3(X_1Q^{(1,2)}Q^{(2,3)}-X_2Q^{(2,3)})e_{k_1,k_2,k_2}\\
			&=A_3X_1Q^{(1,2)}e_{k_1,k_1,k_1}-A_3X_2\sum_{l\neq       k_1}e_{k_1,l,l}\\
			&=A_3X_1\sum_{k \neq k_1}e_{k,k,k_1}-A_3P^{(2,3)}P^{(1,2)}X_2\sum_{l\neq k_1}e_{k_1,l,l}\\
			&=A_3X_1\sum_{k \neq k_1}e_{k,k,k_1}-A_3X_1\sum_{  l\neq k_1}e_{l,l,k_1}\\
			&=0.
		\end{aligned}
	\end{equation}	
	Therefore equation \eqref{operator3} is verified, and the lemma is proved.
\end{proof}

In the following, we denote $\overline{[k]}=\{1,3,\dots,2k-1\}$ and $\underline{[k]}=\{2,4,\dots,2k\}$ for any $k \geq 1$. We denote by $X^I_J$ the submatrix of $X$ whose row (resp. column) indices belong to  $I$ (resp. $J$), and $\det(X^I_J)$ is the determinant of $X^I_J$.

For any set  $I=\{i_1<i_2<\ldots<i_{2m}\}$ and  $\pi \in \Pi_m$, we denote $I'_{\pi}=\{i_{\pi_1}, i_{\pi_3}, \ldots ,  i_{\pi_{2m-1}}\}$ and $I''_{\pi}=\{i_{\pi_2}, i_{\pi_4}, \ldots , i_{\pi_{2m}}\}$.

\subsection{Proof of Theorem \ref{huks}}
Equation \eqref{huks-identity} is equivalent to the following equation
\begin{equation}\label{huks-identity1}
	\begin{aligned}
		&A_n(X_1Y_1+ h(n-2))\cdots (X_{n-1}Y_{n-1}) (X_{n}Y_{n}-h)e_{1,\ldots ,n}\\
		&=A_n X_1\cdots X_{n} Y_1\cdots Y_ne_{1,\ldots ,n}.
	\end{aligned}
\end{equation}

The following Lemma gives an expansion of the left side of Eq. \eqref{huks-identity1}.
\begin{lemma}\label{huks-lemma1}Let $n$ be even, matrices $X$ and $Y$ satisfy the conditions in Theorem \ref{huks}. For any $m\leq n$, one has
in  $  \mathcal A \ot \mathrm{End}( (\mathbb C^n)^{\otimes m})$
\begin{equation}\label{huks-claim1}
	\begin{aligned}
		&A_m(X_1Y_1+ h(m-2))\cdots (X_{m}Y_{m}-h)\\
		&=A_m X_1\cdots X_{m} Y_1\cdots Y_m\\
		&\quad +A_m\sum_{p=0}^{\lceil m/2 \rceil-1} (-h)^{p+1}(s_p+w_p),
	\end{aligned}
\end{equation}
where  $\lceil m/2 \rceil$ is the least integer $\geq m/2$ and
\begin{equation}\label{sp-term}
\begin{aligned}
		s_p = \sum_{\mid I\mid=2p+2, \atop \pi \in \Pi_{I} }  \left(\prod_{i \in  I^{''c}_{\pi}}^m X_{i}\right) Q^{(i_{\pi_1},i_{\pi_2})}\cdots Q^{(i_{\pi_{2p+1}},i_{\pi_{2p+2}})} \left(\prod_{i \in   I^{'c}_{\pi} }^m Y_{i}\right),
\end{aligned}
\end{equation}
where the sum is taken over all $I=\{i_1<i_2<\ldots<i_{2p+2}\}\subset [1, m]$ 
and all $\pi \in \Pi_{I}$.
\begin{equation}\label{wp-term}
	\begin{aligned}
		w_p= \sum_{\mid I\mid=2p, \atop \si \in\Pi_{I} }  \sum_{k \in I^c} \left(\prod_{ i \in  I^{''c}_{\si} \setminus \{k\} }^m X_{i}\right)Q^{(i_{\si_1},i_{\si_2})}\cdots Q^{(i_{\si_{2p-1}}, i_{\si_{2p}})} \left(\prod_{ i \in I^{'c}_{\si} \setminus \{k\} }^m Y_{i}\right).
	\end{aligned}
\end{equation}

\end{lemma}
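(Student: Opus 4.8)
The plan is to prove \eqref{huks-claim1} by induction on $m$, peeling off the first factor $(X_1Y_1+h(m-2))$ just as in the proofs of Theorems \ref{Capelli immanant} and \ref{sym-turnbull}, but this time keeping — rather than cancelling — all the terms created when $Y_1$ is commuted to the right. For $m=1$ the right-hand side is $A_1X_1Y_1+(-h)(s_0+w_0)$ with $s_0=0$ (no two-element subset of $[1]$) and $w_0=\mathrm{id}$, which is $A_1(X_1Y_1-h)$, so the base case holds. For the inductive step I would use $A_m=A_mA_{m-1}$ with $A_{m-1}$ the antisymmetrizer on $\{2,\dots,m\}$, use that $(X_1Y_1+h(m-2))$ commutes with $A_{m-1}$, and apply the induction hypothesis to the last $m-1$ factors; writing $\tilde s_p,\tilde w_p$ for $s_p,w_p$ built on the index set $\{2,\dots,m\}$, this turns the left-hand side of \eqref{huks-claim1} into
\[
A_m(X_1Y_1+h(m-2))\Bigl(X_2\cdots X_mY_2\cdots Y_m+\sum_{p=0}^{\lceil(m-1)/2\rceil-1}(-h)^{p+1}(\tilde s_p+\tilde w_p)\Bigr).
\]

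The core of the argument is to expand this product, using only the relation \eqref{huks-rela1} in the form $Y_1X_i=X_iY_1+h(P^{(1,i)}-Q^{(1,i)})$ for $i\ge 2$, the operator identities $A_mP^{(1,i)}=-A_m$ and $P^{(1,i)}Q^{(1,i)}=Q^{(1,i)}$, the commutativity of $Q^{(1,i)}$ with $X_j,Y_j$ for $j\neq 1,i$, and Lemma \ref{claim-lemma}. Pushing $Y_1$ across each summand, every crossing of an $X_i$ either lets $Y_1$ travel on — and when it reaches the right end factor $1$ has become active, yielding a part of $s_p$ or $w_p$ at level $m$ with $1\notin I$ — or drops one $h(P^{(1,i)}-Q^{(1,i)})$. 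A $P^{(1,i)}$-drop is absorbed by conjugating the transposition to the front and using $A_mP^{(1,i)}=-A_m$, returning $-h$ times the original summand with factor $1$ now idle. A $Q^{(1,i)}$-drop attaches a new $Q$ to factor $1$: if $i$ is not yet a $Q$-endpoint this lengthens the $Q$-chain and produces the summands of $s_{p+1}$ or $w_{p+1}$ with $1\in I$; if $i$ is already an endpoint one slides $X_1$ next to $Q^{(1,i)}$ and its partner $Q^{(i,j)}$ and applies Lemma \ref{claim-lemma} to collapse $X_1Q^{(1,i)}Q^{(i,j)}$ to $X_iQ^{(i,j)}$, again returning $-h$ times the original summand. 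The scalar $h(m-2)$ just rescales $\tilde s_p$ and $\tilde w_p$. The point of the particular coefficient $h(m-2)$ — that is, of the shift $n-2$ in the diagonal of Theorem \ref{huks} — is that for each $\tilde s_p$-summand exactly $m-1$ collapses reproduce a copy of it, so that with the scalar term the net coefficient of that copy is $(m-1)(-h)+h(m-2)=-h$; and regarded as a level-$m$ operator a $\tilde s_p$-summand has factor $1$ idle and therefore coincides, summand by summand, with the $k=1$ part of $w_{p+1}$. For each $\tilde w_p$-summand there are instead $m-2$ such collapses, which cancel the scalar term exactly, as they must since a naked copy of a $\tilde w_p$-summand has two idle factors and belongs to neither sum.

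The hard part will be the combinatorial reconciliation of all these contributions: one must verify that, after the collapses, the terms — indexed by the subset/shuffle inherited from the induction hypothesis, the place where $Y_1$ stopped, and the auxiliary identity invoked — reassemble, with the correct signs and powers of $h$, into $A_mX_1\cdots X_mY_1\cdots Y_m+A_m\sum_{p=0}^{\lceil m/2\rceil-1}(-h)^{p+1}(s_p+w_p)$. This entails matching the sign that $A_m$ assigns the shuffle permutation $\tau_I$ with the conventions built into $\Pi_I$; checking that $p$ never exceeds $\lceil m/2\rceil-1$, which holds because $s_p$ (resp. $w_p$) is automatically zero once $2p+2>m$ (resp. $2p\ge m$) for lack of an admissible index set, so the extra $Q$ created at the top of the induction contributes nothing; and confirming that no summand of $s_p$ or $w_p$ is generated twice, which is precisely what the $Q$-collapse of Lemma \ref{claim-lemma} guarantees. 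I expect this accounting to be essentially all the work, while each individual manipulation is routine given the relations listed above.
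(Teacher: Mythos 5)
Your proposal is correct and follows essentially the same route as the paper: induction on $m$ via $A_m=A_mA_{m-1}$, commuting $Y_1$ rightward with \eqref{huks-rela1}, absorbing the $P^{(1,j)}$ terms by antisymmetry and the $Q^{(1,j)}$ terms with $j$ a $Q$-endpoint by Lemma \ref{claim-lemma}, and your coefficient counts ($m-1$ collapses against $h(m-2)$ for an $\tilde s_p$-summand landing in the $k=1$ part of $w_{p+1}$, and $m-2$ for a $\tilde w_p$-summand cancelling the scalar exactly) match the paper's terms $hp\,A_ms_p'-h(p+1)A_ms_p'=-hA_ms_p'$ and $A_mX_1w_p'Y_1-hA_mg_p$. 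The ``combinatorial reconciliation'' you defer is precisely what the paper carries out by introducing the explicit collections $f_p$ and $g_p$ of new $Q^{(1,j)}$-terms with $j\notin I$ and checking $s_{p+1}=X_1s_{p+1}'Y_1+f_p$, $w_{p+1}=s_p'+X_1w_{p+1}'Y_1+g_p$, so no new idea is needed to complete it.
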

\begin{proof}
We prove the identity \eqref{huks-claim1} by induction on $m$. It is obvious for $m=1$. Note that
\begin{equation}
	\begin{aligned}
		&A_m(X_1Y_1+ h(m-2))(X_2Y_2+h(m-3))\cdots (X_{m}Y_{m}-h)\\
		&=A_m(X_1Y_1+ h(m-2)) A_{m-1} (X_2Y_2+h(m-3)) \cdots (X_{m}Y_{m}-h),
	\end{aligned}
\end{equation}
where  $A_{m-1}$ is the antisymmetrizer on the indices $\{2,\ldots,m\}$.

By the induction hypothesis, we have that
\begin{equation}
	\begin{aligned}
		&A_{m-1} (X_2Y_2+h(m-3)) \cdots (X_{m}Y_{m}-h)\\
		&=A_{m-1} X_2\cdots X_{m} Y_2\cdots Y_m\\
		&\quad +A_{m-1} \sum_{p=0}^{\lceil (m-1)/2 \rceil-1} (-h)^{p+1}(s'_p+w'_p),
	\end{aligned}
\end{equation}
where $s'_p$ and $w'_p$ are defined on the indices $\{2,\ldots,m\}$ in the same way as the above equations \eqref{sp-term} and \eqref{wp-term}.
	
Using relation \eqref{huks-rela1} and the following equations
\begin{equation*}
	\begin{aligned}
		&A_mX_1\cdots X_{j-1}P^{(1,j)}=-A_mX_2\cdots X_{j},\ 2 \leq j \leq m,\\
		&A_m  \left(\prod_{i \in  (I^{''}_{\pi}\cup \{j\})^c } X_{i}\right)
		P^{(1,j)} =-A_m \left(\prod_{i \in  [2,m] \setminus I^{''}_{\pi} } X_{i}\right),\ j \in  [2,m] \setminus I^{''}_{\pi},
	\end{aligned}
\end{equation*}
we have that
\begin{equation}\label{huks-claim-term1}
	\begin{aligned}
		&A_m(X_1Y_1+ h(m-2))X_2\cdots X_{m}Y_2\cdots Y_m\\
		&=A_mX_{1}\cdots X_{m}Y_{1}\cdots Y_{m}-hA_mX_2\cdots X_{m}Y_2\cdots Y_m\\
		&\quad-hA_m \sum_{j=2}^{m} \left(\prod_{i \neq j } X_{i}\right)Q^{(1,j)}\left(\prod_{i \neq 1 } Y_{i}\right),
	\end{aligned}
\end{equation}
and
\begin{equation}\label{huks-claim-term2}
	\begin{aligned}
		&A_m(X_1Y_1+ h(m-2)) s'_p\\
		&=A_mX_1s'_pY_1+hpA_ms'_p -hA_m \sum_{I\subset [2,m] ,\atop \mid I \mid =2p+2}\sum_{\pi \in \Pi_{I}, \atop j \in [2,m]\setminus  I^{''}_{\pi}}  \left(\prod_{i \in   ( I^{''}_{\pi}\cup \{j\})^c } X_{i}\right)\\
		&\quad \times Q^{(1,j)} Q^{(i_{\pi_1},i_{\pi_2})}\cdots
	    Q^{(i_{\pi_{2p+1}} , i_{\pi_{2p+2}})} \left(\prod_{i \in  [2,m] \setminus I'_{\pi} }Y_{i}\right).
	\end{aligned}
\end{equation}
By  Lemma \ref{claim-lemma} it follows that for any  $j \in I'_{\pi} \subset [2,m]\setminus  I^{''}_{\pi} $,
\begin{equation}\label{huks-claim-term3}
	\begin{aligned}
		&A_m  \left(\prod_{i \in  ( I^{''}_{\pi}\cup \{j\})^c } X_{i}\right) Q^{(1,j)} Q^{(i_{\pi_1},i_{\pi_2})} \cdots
		Q^{(i_{\pi_{2p+1}} , i_{\pi_{2p+2}})} \\
		&=A_m \left(\prod_{i \in  [2,m] \setminus I^{''}_{\pi} } X_{i}\right)Q^{(i_{\pi_1},i_{\pi_2})} \cdots
		Q^{(i_{\pi_{2p+1}} , i_{\pi_{2p+2}})},
	\end{aligned}
\end{equation}
so combining identities \eqref{huks-claim-term2} and \eqref{huks-claim-term3}, we have that
\begin{equation}
	\begin{aligned}
		A_m(X_1Y_1+ h(m-2)) s'_p=A_mX_1s'_pY_1-hA_ms'_p-hA_mf_p,
	\end{aligned}
\end{equation}
where
\begin{equation}
   \begin{aligned}		
     	f_p&=\sum_{I\subset [2,m] ,\atop \mid I \mid =2p+2}\sum_{\pi \in \Pi_{I}, \atop j \in [2,m]\setminus I}  \left(\prod_{ i \in   ( I^{''}_{\pi} \cup \{j\})^c } X_{i}\right) Q^{(1,j)} Q^{(i_{\pi_1},i_{\pi_2})}\cdots\\
		&\quad \cdots Q^{(i_{\pi_{2p+1}} , i_{\pi_{2p+2}})} \left(\prod_{i \in  [2,m] \setminus I'_{\pi} }Y_{i}\right).
	\end{aligned}
\end{equation}	
Similarly, we can obtain that
\begin{equation}
	\begin{aligned}
		&A_m(X_1Y_1+ h(m-2)) w'_p=A_mX_1w'_pY_1-hA_mg_p,
	\end{aligned}
\end{equation}
where
\begin{equation}
	\begin{aligned}			   	
		g_p&= \sum_{I \subset [2,m],\atop  \mid I \mid =2p} \sum_{\substack{\si \in\Pi_{I}, \\ k, j \in [2,m] \setminus I,\\ k \neq j}}
		\left(\prod_{ i \in   ( I^{''}_{\si} \cup \{k,j\})^c } X_{i}\right) Q^{(1,j)} Q^{(i_{\si_1},i_{\si_2})}  \cdots \\
		&\quad \cdots Q^{(i_{\si_{2p-1}}, i_{\si_{2p}})} \left(\prod_{  i \in   ( I^{'}_{\si} \cup \{1,k\})^c} Y_{i}\right).
	\end{aligned}
\end{equation}

Subsequently we have that
\begin{equation}\label{huks-claim-term5}
	\begin{aligned}
		&A_m(X_1Y_1+ h(m-2))(X_2Y_2+h(m-3))\cdots (X_{m}Y_{m}-h)\\
		&=A_mX_{1}\cdots X_{m}Y_{1}\cdots Y_{m}\\
		&\quad -hA_m  X_2\cdots X_{m}Y_2\cdots Y_m
		-hA_m \sum_{j=2}^{m}  \left(\prod_{i \neq j } X_{i}\right)Q^{(1,j)}\left(\prod_{i \neq 1 } Y_{i}\right)\\
		&\quad +A_m\sum_{p=0}^{\lceil \frac{(m-1)}2 \rceil-1} (-h)^{p+1}(X_1s'_pY_1-hs'_p-hf_p+X_1w'_pY_1-hg_p),
	\end{aligned}
\end{equation}
which implies \eqref{huks-claim1} by using
\begin{equation*}
	\begin{aligned}
		s_0&=X_1s'_0Y_1+\sum_{j=2}^{m} \left(\prod_{i \neq j } X_{i}\right)Q^{(1,j)}\left(\prod_{i \neq 1 } Y_{i}\right),\\
		w_0&=X_1w'_0Y_1+X_2\cdots X_{m}Y_2\cdots Y_m,
	\end{aligned}
\end{equation*}
for  $p \geq 1$,
\[
s_p=X_1s'_pY_1-hf_{p-1},\
w_p=-hs'_{p-1}+X_1w'_pY_1-hg_{p-1}.
\]
\end{proof}

In view of Lemma \ref{huks-lemma1}, in order to show \eqref{huks-identity1} it is sufficient to prove that
\begin{equation}\label{huks-identity2}
	\begin{aligned}
		  A_n(s_p+w_p)e_{1,\ldots,n}=0,
	\end{aligned}
\end{equation}
for any $0 \leq p \leq \dfrac{n}{2}-1$.
For this we first note that 
\begin{equation}
	\begin{aligned}
		&A_ns_p\\
&=A_n \left(\prod_{i \in \underline{[p+1]}^c} X_{i}\right) Q^{(1,2)} \cdots Q^{(2p+1,2p+2)}\left(\prod_{ i \in \overline{[p+1]}^c}Y_i\right) F_{p+1} G_{p+1},
	\end{aligned}
\end{equation}
and
\begin{equation}
	\begin{aligned}
		A_nw_p&=A_n\left(\prod_{i \in \underline{[p+1]}^c} X_{i}\right) Q^{(1,2)} \cdots Q^{(2p-1,2p)}\left( \prod_{ i \in \overline{[p]}^c \setminus \{2p+2\} }  Y_i\right) \\
		& \quad \times
		\left(1-\sum_{ k \notin ([2p] \cup \{2p+2\})}  P^{(k,2p+2)}\right) F_{p}G_{p}.
	\end{aligned}
\end{equation}

Therefore, $A_ns_pe_{1,\ldots,n}$ and $A_nw_pe_{1,\ldots,n}$ can be written as follows
\begin{equation*}
	\begin{aligned}
		&A_n s_p e_{1,\ldots,n}
		 = A_n \left(\prod_{i \in \underline{[p+1]}^c} X_{i}\right)A'_{p+1}A''_{n-p-1}  Q^{(1,2)}\cdots Q^{(2p-1,2p)} \varPhi  e_{1, \ldots ,n},\\
		&A_n w_p e_{1,\ldots,n}
		 = A_n \left(\prod_{i \in \underline{[p+1]}^c} X_{i}\right)A'_{p+1}A''_{n-p-1}  Q^{(1,2)}\cdots Q^{(2p-1,2p)}  \varPsi e_{1, \ldots ,n}.
	\end{aligned}
\end{equation*}
where $A'_{p+1}$ and $A''_{n-p-1}$ are  the antisymmetrizers on the indices  $\underline{[p+1]}$ and $[n] \setminus \underline{[p+1]}$ respectively, and
\begin{equation}\label{e:Phi}
	\begin{aligned}		
		\varPhi=Q^{(2p+1,2p+2)}\left(\prod_{ i \in \overline{[p+1]}^c}Y_i\right) F_{p+1} G_{p+1},
	\end{aligned}
\end{equation}
\begin{equation}\label{e:Psi}
	\begin{aligned}		
		\varPsi=\left( \prod_{ i \in \overline{[p]}^c \setminus \{2p+2\} }  Y_i\right)  \left(1-\sum_{ k \notin ([2p] \cup \{2p+2\})}  P^{(k,2p+2)}\right) F_{p}G_{p}.
	\end{aligned}
\end{equation}

The following Lemma implies \eqref{huks-identity2}, therefore Theorem \ref{huks} is proved.

\begin{lemma}\label{huks-lemma2} For even $n$, let
$Y$ be an anti-symmetric matrix in $\Mat_{n }(\mathcal A)$ with commuting entries.
Then for $0\leq p \leq \dfrac{n}{2}-1$,
\begin{equation}\label{husk-lemma-identity}
\begin{aligned}
	A'_{p+1}A''_{n-p-1}  Q^{(1,2)}\cdots Q^{(2p-1,2p)}(\varPhi + \varPsi)e_{1, \ldots ,n}=0,
\end{aligned}
\end{equation}
where $Q$ is the transposed transposition, $\varPhi=\varPhi(Y)$ and $\varPsi=\varPsi(Y)$ are defined in \eqref{e:Phi} and \eqref{e:Psi}.
\end{lemma}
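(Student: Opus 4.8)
The plan is to reduce \eqref{husk-lemma-identity} to an identity between Pfaffian expansions and then derive it from the Laplace-type expansion for Pfaffians (Theorem \ref{Laplace formulas for Pfaffian}) together with the alternating identity \eqref{alternating identity}. I first expand everything on the base vector $e_{1,\dots,n}$ using the definitions of $G_m$ and $F_m$: applying $G_{p+1}$ to $e_{1,\dots,n}$ distributes the tensor over all $(2p+2)$-subsets $I=\{i_1<\cdots<i_{2p+2}\}$, placing $I$ in the first $2p+2$ slots and $I^c$ in the remaining ones with sign $sgn(I,I^c)$, and then $F_{p+1}$ reshuffles the first block over all $2$-shuffles $\pi\in\Pi_{p+1}$ with sign $sgn(\pi)$; similarly for $G_p$ and $F_p$ inside $\varPsi$.

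The first key step is to collapse the $Q$-chains. In $\varPhi$ the operators $Q^{(1,2)}\cdots Q^{(2p-1,2p)}Q^{(2p+1,2p+2)}$ act on a state with bare basis vectors $e_{i_{\pi_{2j-1}}}$ on the odd slots $1,3,\dots,2p+1$ of the first block and with $Y$ applied on the even slots $\underline{[p+1]}=\{2,4,\dots,2p+2\}$; each $Q^{(2j-1,2j)}$ then contracts the pair into the scalar $Y_{i_{\pi_{2j-1}},i_{\pi_{2j}}}$ times the fixed vector $\sum_c e_c\ot e_c$, by the same contraction underlying \eqref{eq pf}. Summing over $\pi\in\Pi_{p+1}$ and using $\Pf(Y_I)=\sum_{\pi\in\Pi_{p+1}}sgn(\pi)\prod_j Y_{i_{\pi_{2j-1}},i_{\pi_{2j}}}$, the contribution of $\varPhi$ becomes $\sum_{|I|=2p+2}sgn(I,I^c)\Pf(Y_I)$ times the vector whose first $2p+2$ slots carry the fixed diagonal vector and whose tail slots carry $\bigotimes_{k\in I^c}Ye_{i_k}$, on which the interleaved antisymmetrizers $A'_{p+1}$ (on $\underline{[p+1]}$) and $A''_{n-p-1}$ (on its complement) still act. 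Running the same computation for $\varPsi$, its level-$p$ machinery collapses into $\sum_{|J|=2p}sgn(J,J^c)\Pf(Y_J)$, while the operator $1-\sum_k P^{(k,2p+2)}$ records which tail index is deposited, without a $Y$, into slot $2p+2$: the term $1$ keeps the natural index there with a plus sign, and each transposition $P^{(k,2p+2)}$ moves a different tail index there with a minus sign.

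The second key step is to match the two contributions. Writing a $(2p+2)$-set $I$ as $J\sqcup\{a,b\}$ with $|J|=2p$ and Laplace-expanding $\Pf(Y_I)$ along the position that ends up in slot $2p+2$ — using the single-row expansion $\Pf(Y_I)=\sum_j(-1)^{j-2}(i_1,i_j)(i_2,\dots,\widehat{i_j},\dots,i_{2p+2})$ recalled above, so that one factor $Y_{a,b}$ promotes $\Pf(Y_J)$ to $\Pf(Y_I)$ — one sees that the $\varPhi$-contribution for $I$ equals, termwise, $(-1)$ times the sum of the $\varPsi$-contributions over all $J\subset I$; here the alternating identity \eqref{alternating identity} is what lets one absorb the bare slot-$2p+2$ index together with the tail $Y$-images back into $\Pf$ of the enlarged matrix, and it is also what reconciles the block structure that the interleaved antisymmetrizers $A'_{p+1}$ and $A''_{n-p-1}$ impose on the odd and even slots. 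Combining the two steps gives $A'_{p+1}A''_{n-p-1}Q^{(1,2)}\cdots Q^{(2p-1,2p)}(\varPhi+\varPsi)e_{1,\dots,n}=0$.

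I expect the main obstacle to be the sign bookkeeping: one must simultaneously track the shuffle signs from $F_m$, the splitting signs $sgn(I,I^c)$ from $G_m$, the signs $(-1)^{j-2}$ from the row expansion of the Pfaffian, and the signs produced when $A'_{p+1}$ and $A''_{n-p-1}$ — which act on the interleaved families of even versus odd slots — are commuted past the $Q$'s and the transpositions $P^{(k,2p+2)}$. The alternating identity \eqref{alternating identity} is precisely the device that makes all of these align, but verifying the match for every pair $(I,J)$ and every choice of displaced tail index is the technical heart of the argument.
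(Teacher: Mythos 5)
Your first step --- collapsing the $Q$-chains via \eqref{eq pf} so that the $F$--$G$ sums produce Pfaffian coefficients $\Pf(Y_I)$, and the antisymmetrized $Y$-images on the tail slots produce determinants --- is exactly how the paper begins, and is fine. The gap is in the second step. After the collapse, a $\varPhi$-term places the \emph{same} free index $t$ in slots $2p+1$ and $2p+2$, whereas a $\varPsi$-term places a $Y$-image (a determinant column index) in slot $2p+1$ and a bare displaced index $t$ in slot $2p+2$. Since slot $2p+2$ lies in the $A'_{p+1}$-block and slot $2p+1$ in the $A''_{n-p-1}$-block, a $\varPsi$-term can only be matched against a $\varPhi$-term when the displaced index $t$ reappears among the determinant's column indices $J$. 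The $\varPsi$-terms with $t\notin J$ have no counterpart in $\varPhi$ at all, and their coefficients must vanish on their own; this is the paper's identity \eqref{huks-lemma-term6}, proved by assembling the sum over $I$ into the Pfaffian of the bordered matrix built on $Y_{[n]\setminus\{t\}}$, which is singular precisely because $n-1$ is odd. This is where the hypothesis that $n$ is even enters the lemma, and your proposal never confronts it.

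The matching step itself is also not the right mechanism as described. The two determinant factors have different sizes ($\det(Y_{I^c}^{J})$ of order $n-2p-2$ on the $\varPhi$ side versus $\det(Y_{I^c\setminus\{t\}}^{t,J})$ of order $n-2p-1$ on the $\varPsi$ side), so the discrepancy between the two sides is an extra matrix entry coming from expanding the larger \emph{determinant} along its column $t$, not a factor $Y_{a,b}$ produced by row-expanding $\Pf(Y_I)$; and the correct identity, the paper's \eqref{huks-lemma-term3}, is an equality of \emph{sums over all} $I$ (of sizes $2p+2$ and $2p$ respectively), each of which is shown via Theorem \ref{Laplace formulas for Pfaffian} to equal, up to sign, the single bordered Pfaffian $\Pf\begin{pmatrix} Y_{[n]} & Y_{[n],J}\\ Y_{J,[n]} & 0\end{pmatrix}$, with \eqref{alternating identity} reconciling the position of the row and column indexed by $t$. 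A termwise claim ``the $\varPhi$-contribution of $I$ equals $-1$ times the $\varPsi$-contributions over $J\subset I$'' is not what holds, among other reasons because the left-hand sum also runs over sets $I$ not containing $t$. To repair the argument you need (i) the separate vanishing statement for the unmatched $t\notin J$ family, and (ii) the global (not termwise) identification of both remaining families with one bordered Pfaffian.
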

\begin{proof}
	The coefficient of $e_k\ot e_l$ in $Q\cdot(e_i\ot e_i)$ is $\delta_{kl}$, then we have that
\begin{equation}\label{huks-lemma-term1}
    \begin{aligned}
		&A'_{p+1}A''_{n-p-1}  Q^{(1,2)}\cdots Q^{(2p-1,2p)}\varPhi e_{1, \ldots ,n}\\
		&=A'_{p+1}A''_{n-p-1} \sum_{\mid I \mid =2p+2, \atop \mid I'\sqcup J \mid =n-p-1}  sgn(I,I^c) \Pf(Y_{I }) \det(Y_{I ^c } ^{J}) e_{i'_1,i'_1,i'_3,i'_3,\ldots,i'_{2p+1},i'_{2p+1} , J} \\	
        &=A'_{p+1}A''_{n-p-1}\sum_{ t \in [n] }\sum_{\mid I \mid =2p+2,\atop I'\sqcup J \subseteq [n]\setminus \{t\}} sgn(I,I^c) \Pf(Y_{I})\det(Y_{I^c}^{J}) e_{i'_1,i'_1,i'_3,i'_3,\ldots,t,t , J}, 
		\end{aligned}
\end{equation}	
 where the sum is over all $I=\{1\leq i_1<i_2<\ldots<i_{2p+2}\leq n\}$
and $(n-p-2)$-element subsets $I'\sqcup J \subseteq [n]\setminus \{t\}$, such that $J=\{1\leq j_1<j_2<\ldots<j_{n-2p-2} \leq n\}$. Note that
\begin{align*}	
		&A'_{p+1}A''_{n-p-1}Q^{(1,2)}\cdots Q^{(2p-1,2p)}\varPsi e_{1, \ldots ,n}\\
		&=A'_{p+1}A''_{n-p-1} \sum_{\mid I' \mid =p \atop \mid I \mid =2p}  sgn(I,I^c) \Pf(Y_{I})  \left( \prod_{ i \notin ([2p]\cup\{2p+2\}) }  Y_i\right) \\
&\quad \times
\left(1-\sum_{ k \notin ([2p] \cup \{2p+2\})}  P^{(k,2p+2)}\right)
 e_{i'_1,i'_1,i'_3,i'_3,\ldots,i'_{2p-1},i'_{2p-1}, I^c}\\
        &=-A'_{p+1}A''_{n-p-1} \sum_{\mid I \mid =2p,\atop \mid I'\sqcup J\mid =n-p-1}  \sum_{k=2p+1} ^{n} sgn(I,i_k,I^c\setminus\{i_k\}) \Pf(Y_{I })\\
&\quad 
\det(Y_{ I^c \setminus \{i_k\}  } ^{J})
e_{i'_1,i'_1,i'_3,i'_3,\ldots,i'_{2p-1},i'_{2p-1}, j_1,i_k,J \setminus \{j_1\}}, 
  	\end{align*}
where the sum is over all $I=\{1\leq i_1<i_2<\ldots<i_{2p}\leq n\}$ and $(n-p-1)$-elements subsets  $I'\sqcup J$, such that $J=\{1\leq j_1<j_2<\ldots<j_{n-2p-1} \leq n\}$. 
Here $I^{c}=\{i_{2p+1}<i_{2p+2}<\ldots<i_n\}$.

 Note that $sgn(I,i_k,I^c\setminus\{i_k\})=sgn(i_k,I,I^c\setminus\{i_k\})$. So,
\begin{equation}\label{huks-lemma-term2}
	\begin{aligned}	
		&A'_{p+1}A''_{n-p-1} Q^{(1,2)}\cdots Q^{(2p-1,2p)}\varPsi e_{1, \ldots ,n}\\
		&=-A'_{p+1}A''_{n-p-1} \sum_{\substack{ t \in [n], \\ \mid J\mid =n-2p-1}}  \sum_{I' \subseteq [n] \setminus (\{t\} \cup J) }\sum_{\mid I\mid =2p,\atop  I \subset [n]\setminus \{t\}}  sgn(t,I,I^c\setminus\{t\})   \Pf(Y_{I})\\
&\quad \qquad\qquad\qquad  \cdot\det(Y_{ I^c \setminus \{t\}}^{J}) e_{i'_1,i'_1,i'_3,i'_3,\ldots,i'_{2p-1},i'_{2p-1}, j_1,t,J \setminus \{j_1\}}. 
	\end{aligned}
\end{equation}

If $t \notin J$ in \eqref{huks-lemma-term2}, we show that for any  subset $ J \subset [n]\setminus \{t\}$ with cardinality $n-2p-1$,
\begin{equation}\label{huks-lemma-term6}
	\begin{aligned}
		 \sum_{\mid I\mid =2p,\atop  I \subset [n]\setminus \{t\}}  sgn(I,I^c\setminus\{t\}) \Pf(Y_{I}) \det(Y_{ I^c \setminus \{t\}}^{J})=0.
	\end{aligned}
\end{equation}

Using the Laplace-type expansion for Pfaffian in lemma \ref{Laplace formulas for Pfaffian}, we have that
\begin{equation}
	\begin{aligned}
		&\sum_{\mid I\mid =2p,\atop  I \subset [n]\setminus \{t\}}  sgn(I,I^c\setminus\{t\}) \Pf(Y_{I}) \det(Y_{ I^c \setminus \{t\}}^{J})\\
		&=\sum_{\mid I\mid =2p,\atop  I \subset [n]\setminus \{t\}}  (-1)^{1+inv(I,I^c\setminus\{t\})}
		\Pf(Y_{I}) \det(Y^{ I^c \setminus \{t\}}_{J})\\
		&=(-1)^{p+1+\binom{n-1}{2}}
		\Pf\begin{pmatrix}   Y_{[n]\setminus\{t\}} & Y_{[n]\setminus \{t\},J}\\
			Y_{J,[n]\setminus \{t\}} & 0
		\end{pmatrix}.
	\end{aligned}
\end{equation}

Note that
\begin{equation*}
	\begin{aligned}
		\det\begin{pmatrix}   Y_{[n]\setminus\{t\}} & Y_{[n]\setminus \{t\},J}\\
			Y_{J,[n]\setminus \{t\}} & 0
		\end{pmatrix}=0,
	\end{aligned}
\end{equation*}
so \eqref{huks-lemma-term6} follows.

If  $t \in J$  in \eqref{huks-lemma-term2}, then
\begin{equation*}
	\begin{aligned}
            &A''_{n-2p-1}\det(Y_{ I^c \setminus \{t \}}^{J}) 
e_{i'_1,i'_1,i'_3,i'_3,\ldots,i'_{2p-1},i'_{2p-1}, j_1,t,J \setminus \{j_1\}} \\
            &=A''_{n-2p-1}\det(Y_{ I^c \setminus \{t \}}^{t,J \setminus \{t \}}) 
e_{i'_1,i'_1,i'_3,i'_3,\cdots,i'_{2p-1},i'_{2p-1}, t,t,J \setminus \{t\}}, 
\end{aligned}
\end{equation*}
where $A''_{n-2p-1}$ is the antisymetrizer on the indices  $[n] \setminus( [2p] \cup \{2p+2\})$.

Comparing  \eqref{huks-lemma-term1} with  \eqref{huks-lemma-term2},
to show  \eqref{husk-lemma-identity} it is sufficient to prove that for any given $t \in [n]$ and subset $ J \subset [n]\setminus \{t\}$ with cardinality
$(n-2p-2)$,
\begin{equation}\label{huks-lemma-term3}
	\begin{aligned}
	   &\sum_{\mid I\mid=2p+2 } sgn(I,I^c) \Pf(Y_{I }) \det(Y_{I^c}^{J})\\
	   &= \sum_{\mid I\mid =2p,\atop  I \subset [n]\setminus \{t\}}  sgn(t,I,I^c\setminus\{t\}) \Pf(Y_{I }) \det(Y_{ I^c \setminus \{t\}}^{t,J}).
	\end{aligned}
\end{equation}


Using the Laplace-type expansion of Pfaffian in Lemma \ref{Laplace formulas for Pfaffian}, the left side of \eqref{huks-lemma-term3} is
\begin{equation}\label{huks-lemma-term4}
	\begin{aligned}
		&\sum_{\mid I\mid=2p+2 } sgn(I,I^c) \Pf(Y_{I }) \det(Y_{I^c}^{J})\\
		&=\sum_{\mid I\mid=2p+2 } (-1)^{\sum(I)+(p+1)}\Pf(Y_{I})\det(Y^{I^c}_{J})\\
		&=(-1)^{(p+1)+\binom{n}{2}}
		\Pf\begin{pmatrix}   Y_{[n]} & Y_{[n],J}\\
			Y_{J,[n]} & 0
		\end{pmatrix}.
	\end{aligned}
\end{equation}

The right side  of  \eqref{huks-lemma-term3} is
\begin{equation}\label{huks-lemma-term5}
	\begin{aligned}
		&\sum_{\mid I\mid =2p,\atop  I \subset [n]\setminus \{t\}}  sgn(t,I,I^c\setminus\{t\}) \Pf(Y_{I }) \det(Y_{ I^c \setminus \{t\}}^{t,J})\\
	    &=\sum_{I}  (-1)^{t+inv(I,I^c\setminus\{t\})}  \Pf(Y_{I}) \det(Y^{ I^c \setminus  \{t\}}_{t,J})\\
	    &=(-1)^{t + \frac{2p(2p+1)}{2} +\binom{n-1}{2}} \Pf(A)\\
	    &=(-1)^{t+p+\binom{n-1}{2}} \Pf(A),
	\end{aligned}
\end{equation}
where
\begin{equation*}
	\begin{aligned}
A=\begin{pmatrix}   Y_{[n] \setminus \{t\}} & Y_{[n] \setminus \{t\} , J}  & Y_{[n] \setminus \{t\} ,t} \\
	Y_{J,[n]  \setminus \{t\} }  & 0 & 0\\
	Y_{t, [n] \setminus \{t\} }  & 0 & 0
\end{pmatrix}.
	\end{aligned}
\end{equation*}
It follows from identity \eqref{permutation Pf} that
\begin{equation}
	\begin{aligned}
		\Pf(A)=(-1)^{t} \Pf\begin{pmatrix}   Y_{[n]} & Y_{[n],J}\\
			Y_{J,[n]} & 0
		\end{pmatrix},
	\end{aligned}
\end{equation}
which then implies \eqref{huks-lemma-term3}.

\end{proof}

\subsection{Proof of Theorem \ref{odd-anti}}

Equation \eqref{odd-anti-identity1} is equivalent to the following equation:
\begin{equation}\label{odd-anti-identity2}
	\begin{aligned}
		&A_n(X_1Y_1+ h(n-1))\cdots X_{n}Y_{n}e_{1,\ldots,n},\\
		&=A_n X_1\cdots X_{n} Y_1\cdots Y_ne_{1,\ldots ,n}
	\end{aligned}
\end{equation}

The following Lemma gives an expansion of the left side of \eqref{odd-anti-identity2}.
\begin{lemma}\label{odd-claim}
Let $n$ be odd. The matrices $X$ and $Y$ satisfy the conditions of  Theorem \ref{odd-anti} , then for any $m\leq n$, the following identity holds
in $ \mathcal A \ot \mathrm{End}( (\mathbb C^n)^{\otimes m})$:
\begin{equation}\label{odd-claim-iden}
	\begin{aligned}
		&A_m(X_1Y_1+ h(m-1))\cdots X_{m}Y_{m}\\
		&=A_mX_1\cdots X_mY_1\cdots Y_m +A_m\sum_{k=1}^{\lceil (m-1)/2 \rceil }(-h)^k s_{k-1},
	\end{aligned}
\end{equation}
where
 \begin{equation}\label{sp-term2}
		\begin{aligned}
			s_{k-1} =\sum_{\mid I\mid=2k, \atop \pi \in \Pi_{k} } \left(\prod_{i \in  I^{''c}_{\pi}}^m X_{i}\right) Q^{(i_{\pi_1},i_{\pi_2})}\cdots Q^{(i_{\pi_{2k-1}},i_{\pi_{2k}})} \left(\prod_{i \in   I^{'c}_{\pi} }^m Y_{i}\right).
		\end{aligned}
	\end{equation}

\end{lemma}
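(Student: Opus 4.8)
The plan is to prove \eqref{odd-claim-iden} by induction on $m$, in close parallel with the proof of Lemma~\ref{huks-lemma1}; the essential difference is that the last factor is $X_mY_m$ rather than $X_mY_m-h$, and this forces all the ``extra--index'' $w$--type terms of the even case to cancel, leaving only the $s$--terms.

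For $m=1$ the sum $\sum_{k=1}^{\lceil 0/2\rceil}$ is empty and $X_1Y_1+h(m-1)=X_1Y_1$, so both sides equal $A_1X_1Y_1$. For the inductive step I would use $A_m=A_mA_{m-1}$, with $A_{m-1}$ the antisymmetrizer on $\{2,\dots,m\}$, and observe that $A_{m-1}$ (a scalar operator on slots $2,\dots,m$) commutes with the first factor $X_1Y_1+h(m-1)$; applying the induction hypothesis to $A_{m-1}(X_2Y_2+h(m-2))\cdots X_mY_m$ yields
\begin{equation*}
A_m(X_1Y_1+h(m-1))\cdots X_mY_m=A_m(X_1Y_1+h(m-1))\Bigl(X_2\cdots X_mY_2\cdots Y_m+\sum_{k=1}^{\lceil(m-2)/2\rceil}(-h)^{k}s'_{k-1}\Bigr),
\end{equation*}
where $s'_{k-1}$ denotes the analogue of \eqref{sp-term2} formed on the index set $\{2,\dots,m\}$. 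Then I would push $Y_1$ to the right past $X_2,\dots,X_m$ using \eqref{odd-anti-rela1} (it commutes with every $Q$ it meets), each move producing a correction $h(P^{(1j)}-Q^{(1j)})$.

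The $P^{(1j)}$--corrections are dealt with by conjugating $P^{(1j)}$ to the front and using $A_mP^{(1j)}=-A_m$: such a correction merely trades $X_1$ for $X_j$, and so reproduces, carrying the scalar $-h$, exactly the term it was split off from. On $X_2\cdots X_mY_2\cdots Y_m$ there are $m-1$ of them, contributing $-(m-1)h$ times that term and cancelling the shift contribution $+(m-1)hA_mX_2\cdots X_mY_2\cdots Y_m$; on a term of $s'_{k-1}$, which contains $m-1-k$ of the $X_i$, they contribute $-(m-1-k)h$ times that term, so $P$--corrections plus shift leave a residual of $kh$ times the term. The $-Q^{(1j)}$--corrections are treated with Lemma~\ref{claim-lemma}, used (after conjugating the three relevant tensor slots to $\{1,2,3\}$ as in \eqref{lemm-term1}) in the form $A_3X_aQ^{(a,b)}Q^{(b,c)}=A_3X_bQ^{(b,c)}$: when $j$ is the ``first'' index of a pair of $\pi$, the inserted $Q^{(1,j)}$ is absorbed into that pair and one recovers $-hA_m$ times the original term; summing over the $k$ such choices of $j$ this produces $-kh$ times the term, cancelling the residual $kh$ exactly. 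This cancellation is where the odd case parts with the even one: the shift $h(m-1)$ in place of $h(m-2)$ raises the residual from $(k-1)h$ to $kh$, which is matched precisely by the $k$ absorbable $Q$--corrections, so nothing is left over and no $w$--type term is generated.

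What then survives is the untouched main term $A_mX_1\cdots X_mY_1\cdots Y_m$, the pieces $A_mX_1s'_{k-1}Y_1$, and the $-Q^{(1j)}$--corrections with $j\notin I$, each of which adjoins a brand--new $Q$--pair $(1,j)$ and so lengthens the chain by one. Collecting by powers of $-h$, the terms $(-h)^{k}A_mX_1s'_{k-1}Y_1$ (which account for the part of $s_{k-1}$ with $1\notin I$) together with the length--raising corrections coming from $(-h)^{k-1}s'_{k-2}$ (accounting for the part with $1\in I$, where the new pair is $(1,i_{\pi_2})$) must build up $(-h)^{k}A_ms_{k-1}$, yielding \eqref{odd-claim-iden}. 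The main obstacle is this last, purely combinatorial, reassembly: one must verify that adjoining a new $Q$--pair in all admissible ways to the $2$--shuffles $\Pi_{k-1}$ of the $2(k-1)$--subsets of $[1,m]$ reproduces exactly the $2$--shuffles $\Pi_k$ of the $2k$--subsets, with the correct signs and with every term occurring once, precisely as in the proof of Lemma~\ref{huks-lemma1} and in the identity \eqref{eq pf} — only here the bookkeeping is lighter, since the $w_p$ terms no longer intervene.
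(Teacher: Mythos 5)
Your proposal is correct and follows essentially the same route as the paper: induction on $m$ via $A_m=A_mA_{m-1}$, pushing $Y_1$ rightward with \eqref{odd-anti-rela1}, cancelling the $P^{(1,j)}$-corrections against the shift $h(m-1)$, absorbing the $Q^{(1,j)}$-corrections with $j\in I'_\pi$ by Lemma \ref{claim-lemma}, and reassembling the surviving terms into $s_{k-1}$ according to whether $1\in I$; your observation that the shift $h(m-1)$ (rather than $h(m-2)$) is exactly what eliminates the $w$-type terms is precisely the point of the paper's argument.
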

\begin{proof} We prove the identity \eqref{odd-claim-iden} by induction on $m$ using a similar argument as
in Lemma \ref{huks-lemma1}.
It is obvious for $m=1$. For $m>1$,
 we have that
\begin{equation}\label{odd-claim-term2}
	\begin{aligned}
		&A_m(X_1Y_1+ h(m-1))(X_2Y_2+h(m-2))\cdots X_{m}Y_{m}\\
		&=A_m(X_1Y_1+ h(m-1)) A_{m-1} (X_2Y_2+h(m-2)) \cdots X_{m}Y_{m}\\
		&=A_m(X_1Y_1+ h(m-1))X_2\cdots X_mY_2\cdots Y_m \\
		&\quad +A_m\sum_{k=1}^{\lceil (m-2)/2 \rceil }(-h)^k (X_1Y_1+ h(m-1)) s'_{k-1},
	\end{aligned}
\end{equation}	
where  $s'_{k-1}$ is defined on the indices $\{2,\ldots,m\}$ as equation \eqref{sp-term2}.

It follows from \eqref{odd-anti-rela1} and Lemma \ref{claim-lemma} that
\begin{equation}\label{odd-claim-term3}
	\begin{aligned}	
		    &A_m(X_1Y_1+ h(m-1))X_2\cdots X_mY_2\cdots Y_m\\
			&=A_mX_{1}\cdots X_{m}Y_{1}\cdots Y_{m}\\ &\quad-hA_m\sum_{j=2}^{m} \left(\prod_{i \neq j }X_{i}\right) Q^{(1,j)}\left(\prod_{i \neq 1 }Y_{i}\right),
		\end{aligned}
\end{equation}
and
\begin{equation}
	\begin{aligned}	
		    &A_m\sum_{k=1}^{\lceil (m-2)/2 \rceil }(-h)^k (X_1Y_1+ h(m-1)) s'_{k-1}\\		
			&=-hA_mX_1s'_{0}Y_1 +A_m\sum_{k=2}^{\lceil (m-1)/2 \rceil } (-h)^k  (X_1s'_{k-1}Y_1-hf_{k-2})\\
			&=-hA_mX_1s'_{0}Y_1+A_m\sum_{k=2}^{\lceil (m-1)/2 \rceil } (-h)^k s_{k-1},
	\end{aligned}
\end{equation}
where
\begin{equation}
	\begin{aligned}		
			f_{k-2}&=\sum_{I\subset [2,m] ,\atop \mid I \mid =2k-2}\sum_{\pi \in \Pi_{k-1}, \atop j \in [2,m]\setminus I}  \left(\prod_{ i \in   ( I^{''}_{\pi} \cup \{j\})^c } X_{i}\right) Q^{(1,j)} Q^{(i_{\pi_1},i_{\pi_2})}\cdots\\
			&\quad \cdots Q^{(i_{\pi_{2k-3}} , i_{\pi_{2k-2}})} \left(\prod_{i \in  [2,m] \setminus I'_{\pi} }Y_{i}\right).
	\end{aligned}
\end{equation}	
Since
\begin{equation}
s_0=X_1s'_{0}Y_1+
\sum_{j=2}^{m} \left(\prod_{i \neq j }X_{i}\right) Q^{(1,j)}\left(\prod_{i \neq 1 }Y_{i}\right),
\end{equation}
we obtain the identity \eqref{odd-claim-iden}.
\end{proof}

Therefore, in order to show \eqref{odd-anti-identity2}, it is sufficient to prove that
\begin{equation}\label{odd-anti-identity3}
	\begin{aligned}
		&A_ns_{k-1}e_{1,\ldots,n}=0,
	\end{aligned}
\end{equation}
for any $1 \leq k \leq \dfrac{n-1}{2}$.
Note that $A_ns_{k-1}$ can be written as
\begin{equation}\label{odd-anti-identity4}
	\begin{aligned}
		A_ns_{k-1}=A_n\left(\prod_{i \in   \underline{[k]}^c} X_{i}\right) Q^{(1,2)} \cdots Q^{(2k-1,2k)}\left(\prod_{i \in   \overline{[k]}^c }Y_{i}\right) F_{k}G_{k}.
	\end{aligned}
\end{equation}

The following  lemma implies \eqref{odd-anti-identity3}, which implies \eqref{odd-anti-identity2}.  Subsequently Theorem \ref{odd-anti} follows.
\begin{lemma}\label{anti-odd-lemma}
For odd $n$, let $X$ be an antisymmetric matrix in $\Mat_{n}(\mathcal A)$ with commuting entries.
Then for any $\pi \in \Pi_{k}$, $1 \leq k \leq \dfrac{n-1}{2}$, the following holds in $  \mathcal A \ot \mathrm{End}( (\mathbb C^n)^{\otimes n})$:
	\begin{equation}\label{lemma-odd-operator1}
		A_n\left( \prod_{ i \in I^{''c}_{\pi}} X_i \right)Q^{(i_{\pi_1},i_{\pi_2})} \cdots
		Q^{(i_{\pi_{2k-1}} , i_{\pi_{2k}})}=0.
	\end{equation}

	If $Y$ is an antisymmetric matrix in $\Mat_{n}(\mathcal A)$ with commuting entries,  then for any  $1 \leq k \leq \dfrac{n-1}{2}$ one has that
	\begin{equation}\label{lemma-odd-operator2}
		\begin{aligned}
		A_{n-k}Q^{(1,2)}\cdots Q^{(2k-1,2k)}\left( \prod_{i \in \overline{[k]}^c} Y_i \right) F_{k}G_{k} e_{1, \ldots,n}=0,
		\end{aligned}
	\end{equation}
	where  $A_{n-k}$ is the antisymetrizer on the indices  $[n] \setminus \underline{[k]}$.
\end{lemma}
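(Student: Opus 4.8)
The plan is to prove both operator identities by evaluating them on standard basis vectors and reducing the resulting scalar coefficients, through the Pfaffian formula \eqref{eq pf} together with the Laplace-type expansion of Pfaffians (Theorem~\ref{Laplace formulas for Pfaffian} and its Corollary), to a single bordered Pfaffian
\[
\Pf\begin{pmatrix} Y_{[n]} & Y_{[n],J}\\ Y_{J,[n]} & 0 \end{pmatrix},\qquad |J|=n-2k ,
\]
which vanishes precisely because $n$ is odd.

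For \eqref{lemma-odd-operator1} I would first conjugate by a permutation operator, exactly as in \eqref{lemm-term1}, to reduce to the case that $\pi$ is the identity permutation, so that it suffices to prove
\[
A_n\,X_1X_3\cdots X_{2k-1}X_{2k+1}\cdots X_n\, Q^{(1,2)}\cdots Q^{(2k-1,2k)}=0 .
\]
Applying this operator to $e_{j_1,\dots,j_n}$, the factors $Q^{(2r-1,2r)}$ annihilate the vector unless $j_{2r-1}=j_{2r}$ for every $r$, and in that case they replace the $2k$ contracted slots by free summation indices; after applying $A_n$, the coefficient of a standard basis vector $e_{d_1,\dots,d_n}$ (necessarily with $\{d_i\}=[n]$) becomes a signed sum over the ways of distributing the values $[n]$ between the $2k$ contracted slots and the $n-2k$ free slots. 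Because $X$ is antisymmetric, the part of this sum coming from the contracted slots collapses to a principal Pfaffian $\Pf(X_E)$ of a $2k$-element subset $E$ (the sum over all $(2k)!$ orderings of $E$ equals $2^kk!\,\Pf(X_E)$), while the free slots contribute a minor $\det\big(X_{E^c,J}\big)$ with $J=(j_{2k+1},\dots,j_n)$ (if these values are not distinct the minor, and hence the coefficient, already vanishes). By the Corollary to Theorem~\ref{Laplace formulas for Pfaffian} the sum $\sum_{E}\pm\,\Pf(X_E)\det(X_{E^c,J})$ equals, up to a nonzero scalar factor, $\Pf\begin{pmatrix} X_{[n]} & X_{[n],J}\\ X_{J,[n]} & 0 \end{pmatrix}$.

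For \eqref{lemma-odd-operator2} I would mirror the first half of the proof of Lemma~\ref{huks-lemma2}: write $\prod_{i\in\overline{[k]}^c}Y_i=(Y_2Y_4\cdots Y_{2k})(Y_{2k+1}\cdots Y_n)$, use \eqref{eq pf} to rewrite $Q^{(1,2)}\cdots Q^{(2k-1,2k)}(Y_2\cdots Y_{2k})F_k$ on the first $2k$ tensor factors in terms of $\Pf(Y_I)$, expand $G_ke_{1,\dots,n}=\sum_{|I|=2k}sgn(I,I^c)e_{I,I^c}$, let $Y_{2k+1}\cdots Y_n$ act on the remaining $n-2k$ factors to produce the minors $\det(Y_{I^c}^{J})$, and finally apply the antisymmetrizer $A_{n-k}$. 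As in \eqref{huks-lemma-term1}, the Laplace-type expansion then rewrites the coefficient of each standard basis vector, up to sign, as $\Pf\begin{pmatrix} Y_{[n]} & Y_{[n],J}\\ Y_{J,[n]} & 0 \end{pmatrix}$ with $|J|=n-2k$.

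Finally I would show that this bordered Pfaffian is zero for odd $n$. Since $2k$ is even and $n$ is odd, $|J|=n-2k$ is odd, and it is $\ge 1$ by the hypothesis $k\le (n-1)/2$ (this is exactly where that hypothesis is used). For each $j\in J$, the column of $\begin{pmatrix} Y_{[n]} & Y_{[n],J}\\ Y_{J,[n]} & 0 \end{pmatrix}$ indexed by $j$ and the bordering column associated with $j$ have the same first $n$ entries, so their difference is supported on the last $|J|$ coordinates; these difference vectors are precisely the columns of the antisymmetric matrix $Y_{J,J}$. Since $|J|$ is odd, $\det(Y_{J,J})=0$, so the difference vectors are linearly dependent, hence the columns of the bordered matrix are linearly dependent, so its determinant --- and therefore its Pfaffian --- vanishes. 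I expect the main obstacle to be the sign bookkeeping rather than this conceptual step: one must reconcile the shuffle signs produced by $A_n$, $F_k$ and $G_k$ with the sign $\varepsilon(I,J)$ of Theorem~\ref{Laplace formulas for Pfaffian}, and verify that the accumulated scalar factors (powers of $2$ and factorials) are nonzero, so that vanishing of the Pfaffian really does force the operator to vanish.
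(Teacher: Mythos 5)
Your proposal is correct, but only its second half follows the paper's route. For \eqref{lemma-odd-operator2} you do essentially what the paper does: expand via \eqref{eq pf} and $G_k$ into $\sum_{I} sgn(I,I^c)\Pf(Y_I)\det(Y_{I^c}^{J})$, apply the Laplace-type expansion of Theorem \ref{Laplace formulas for Pfaffian}, and show the resulting bordered Pfaffian
$\Pf\begin{pmatrix} Y_{[n]} & Y_{[n],J}\\ Y_{J,[n]} & 0 \end{pmatrix}$
vanishes; your explicit reason (the difference of each column $j\in J$ with its bordering column is supported on the last $|J|$ coordinates, and these differences form the singular odd-sized antisymmetric matrix $Y_{J,J}$) supplies a detail the paper merely asserts, and is equally well obtained from $\det Y_{[n]}=0$ for odd $n$ via the block factorization of the bordered matrix. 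For \eqref{lemma-odd-operator1}, however, the paper uses no Pfaffians at all: after the same conjugation to a normal form, it pairs each choice $J$ of values fed into the contracted slots with its complement $J^{c}$ inside $[n]\setminus I$, and the two minors $\det(X_{I,J}^{I,J^c})$ and $\det(X_{I,J^c}^{I,J})$ cancel after antisymmetrization, since transposing and using $X^{t}=-X$ gives a factor $(-1)^{n-k}$ that combines with the block-swap sign $(-1)^{k}$ to yield $1+(-1)^{n}=0$. Your version instead collapses the $2k$ contracted slots to $2^{k}k!\,\Pf(X_E)$ and invokes the Corollary to Theorem \ref{Laplace formulas for Pfaffian} to reach the same kind of bordered Pfaffian as in part (2). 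Both arguments are valid: yours is more uniform (a single vanishing bordered Pfaffian, with $|J|=n-2k\ge 1$ guaranteed by $k\le (n-1)/2$, does all the work in both halves) at the cost of the sign and normalization bookkeeping you rightly flag, while the paper's involution-plus-transpose argument for part (1) is shorter and avoids the Pfaffian expansion entirely.
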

\begin{proof}
	$(1)$ Assume that $X$ is antisymmetric. There exists  $\sigma\in  \mathfrak{S}_n$ such that
	\begin{equation}
		\begin{aligned}
			&A_{n}\left( \prod_{ i \in I^{''c}_{\pi}} X_i \right)Q^{(i_{\pi_1},i_{\pi_2})} \cdots
			Q^{(i_{\pi_{2k-1}} , i_{\pi_{2k}})}\\
			&=sgn(\sigma)A_n \left( \prod_{i \in [n-k]} X_i \right)Q^{(n-2k+1,n-k+1)}\cdots Q^{(n-k,n)} P^{\sigma}.
		\end{aligned}
	\end{equation}
	
In order to prove \eqref{lemma-odd-operator1} it is sufficient to show that
	\begin{equation}\label{lemma-odd-sufficient}
		A_n \left( \prod_{i \in [n-k]} X_i \right)Q^{(n-2k+1,n-k+1)}\cdots Q^{(n-k,n)}e_{i_1,\dots,i_n}=0.
	\end{equation}
	
	Since,
	\begin{equation*}
		\begin{aligned}
			 A_n\left( \prod_{i \in [n-k]} X_i \right)=A_n\left( \prod_{i \in [n-k]} X_i \right)A'_{n-2k} ,
		\end{aligned}
	\end{equation*}
	where  $A'_{n-2k}$ be the antisymmetrizer on the indices $\{1,\ldots,n-2k\}$.
So it is sufficient to consider mutually distinct indices  $ i_1,\dots,i_{n-2k}$
and  $i_{n-2k+j}=i_{n-k+j}$ for $1\leq j \leq k$.
Therefore,
	\begin{equation}\label{odd-lemma-terms1}
		\begin{aligned}
			&A_n\left( \prod_{i \in [n-k]} X_i \right)Q^{(n-2k+1,n-k+1)}\cdots Q^{(n-k,n)}e_{i_1,\dots,i_n}\\
			&=A_n\left( \prod_{i \in [n-k]} X_i \right)  \dfrac{1}{2}\sum_{J} \left(e_{I, J, J}+e_{I, J^c, J^c} \right)\\
			&=\dfrac{1}{2}A_n  \sum_{J}\left(\det(X_{I, J}^{I, J^c})e_{I, J^c, J}+\det(X_{I, J^c}^{I, J})e_{I, J, J^c}\right) \\
			&=\dfrac{1}{2}A_n \sum_{J} \left( \det(X_{I, J}^{I, J^c})+  (-1)^{k}\det(X_{I, J^c}^{I, J})\right) e_{I, J^c, J}\\
			&=\dfrac{1}{2}A_n \sum_{J} \left( \det(X_{I, J}^{I, J^c})+(-1)^n  \det(X_{I, J}^{I, J^c}) \right)  e_{I, J^c, J},
		\end{aligned}
	\end{equation}
	where the sum is taken over all  subsets $J \subseteq [n]\setminus I$ with cardinality $k$.
	Since $n$ is odd, we obtain \eqref{lemma-odd-sufficient}.
	
	(2)  Assume that $Y$ is antisymmetric. We have that
	\begin{equation}
		\begin{aligned}
			&A_{n-k}Q^{(1,2)}\cdots Q^{(2k-1,2k)}\left( \prod_{i \in \overline{[k]}^c} Y_i \right) F_{k}G_{k} e_{1, \ldots,n}\\
			&=A_{n-k}\sum_{\mid I\mid =2k , \atop \mid I'\sqcup J\mid =n-k} sgn(I,I^c) \Pf(Y_{I})\det(Y_{I^c}^{J}) e_{i'_1,i'_1,i'_3,i'_3,\ldots,i'_{2p-1},i'_{2p-1} , J}. 
		\end{aligned}
	\end{equation}
	Using the Laplace-type expansion in lemma \ref{Laplace formulas for Pfaffian}, we have that
	\begin{equation}
		\begin{aligned}
			&\sum_{\mid I\mid =2k , \atop \mid J\mid =n-2k} sgn(I,I^c) \Pf(Y_{I})\det(Y_{I^c}^{J})
			=(-1)^{(k+1)+\binom{n}{2}}
			\Pf\begin{pmatrix}   Y_{[n]} & Y_{[n],J}\\
				Y_{J,[n]} & 0
			\end{pmatrix}.
		\end{aligned}
	\end{equation}
	Note that
	\[
	\det\begin{pmatrix}   Y_{[n]} & Y_{[n],J}\\
		Y_{J,[n]} & 0
	\end{pmatrix}
	=0,
	\]
	which implies \eqref{lemma-odd-operator2}.
\end{proof}

\bigskip
\centerline{\bf Acknowledgments}
\medskip
The work is supported in part by the National Natural Science Foundation of China grant nos.
12171303 and 12001218, the Humboldt Foundation, the Simons Foundation grant no. 523868,
and the Fundamental Research Funds for the Central Universities grant nos. CCNU22QN002 and CCNU24JC001.
							
\bigskip
\bigskip

\centerline{\bf Statement on Conflict of Interest}
\medskip
The authors have no conflict of interest to declare that are relevant to this article.

\bigskip
\bigskip

\bibliographystyle{amsalpha}

\end{document}